\newtheorem{theorem}{Theorem}[section]
\newtheorem{lemma}{Lemma}[section]
\newtheorem{proposition}{Proposition}[section]
\newtheorem{corollary}{Corollary}[section]
\newtheorem{remark}{Remark}[section]
\newcommand{\proofendsign}{$\Box$} 
\newenvironment{proof}{{\noindent\textbf{Proof}}}{{\hspace*{\fill}\proofendsign\par\bigskip}}
\newenvironment{proofof}[1]{{\noindent\textbf{Proof\ of\ #1}}}{{\hspace*{\fill}\proofendsign\par\bigskip}}
\newenvironment{normalremark}{\begin{remark}\normalfont}{\end{remark}}
\newcommand{\cA}{{\cal A}}
\newcommand{\cB}{{\cal B}}
\newcommand{\cC}{{\cal C}}
\newcommand{\cE}{{\cal E}}
\newcommand{\cF}{{\cal F}}
\newcommand{\cU}{\cal U}
\newcommand{\cX}{{\cal X}}
\newcommand{\EEE}{\mathbb{E}}
\newcommand{\NNN}{\mathbb{N}}
\newcommand{\RRR}{\mathbb{R}}
\newcommand{\ZZZ}{\mathbb{Z}}
\newcommand{\pr}{\mathbb{P}}
\newcommand{\MP}{\mathbb{P}}
\newcommand{\eins}{\mathbbm{1}}
\newcommand{\OFP}{(\Omega,{\cal F},\MP)}
\def\bcswitch{\left\{\renewcommand{\arraystretch}{1.2}\begin{array}{c@{,~}c}}
\def\ecswitch{\end{array}\right.}
\def\bcswitch{\left\{\renewcommand{\arraystretch}{1.2}\begin{array}{c@{~,~}c}}
\def\ecswitch{\end{array}\right.}
\def\bcswitchs{\left\{\renewcommand{\arraystretch}{1.2}\begin{array}{c}}
\def\ecswitchs{\end{array}\right.}
\def\eps{\varepsilon}
\numberwithin{equation}{section}
\begin{document}

\title{A Kolmogorov-Chentsov type theorem on general metric spaces with applications to limit theorems for Banach-valued processes}

\author{Volker Kr\"atschmer\footnote{Faculty of Mathematics, University of Duisburg--Essen, {\texttt volker.kraetschmer@uni-due.de}}\and Mikhail Urusov\footnote{Faculty of Mathematics, University of Duisburg--Essen, {\texttt mikhail.urusov@uni-due.de}}}

\date{}

\maketitle

\begin{abstract}
The paper deals with moduli of continuity for paths of random processes
indexed by a general metric space $\Theta$
with values in a general metric space $\cX$. Adapting the moment condition on the increments from the classical Kolmogorov-Chentsov theorem, the obtained result on the modulus of continuity allows for H\"older-continuous modifications if the metric space $\cX$ is complete.
This result is universal in the sense that its applicability depends only on the geometry of the space $\Theta$. In particular, it is always applicable if $\Theta$ is a bounded subset of a Euclidean space
or a relatively compact subset of a connected Riemannian manifold.
The derivation is based on refined chaining techniques developed by Talagrand.
As a consequence of the main result a criterion is presented to guarantee uniform tightness of random processes with continuous paths.
This is applied to find central limit theorems for Banach-valued random processes.

\smallskip
\emph{Keywords}:
Kolmogorov-Chentsov type theorems, covering numbers, Talagrand's chaining technique, uniform tightness, Banach-valued central limit theorems.
\end{abstract}

\noindent
\textbf{Data Availability Statement:} No datasets were generated or analysed during the study

\section{Introduction and main result}
\label{path continuity}
Let $(\Theta,d_{\Theta})$ be a totally bounded metric space. For subsets $\overline{\Theta}\subseteq\Theta$ the diameter of $\overline{\Theta}$ w.r.t.\ $d_{\Theta}$ will be denoted by $\Delta(\overline{\Theta})$, whereas $N(\overline{\Theta},d_{\Theta},\eta)$ stands for the minimal number to cover $\overline{\Theta}$ with closed $d_{\Theta}$-metric balls of radius $\eta > 0$
with centers in $\overline\Theta$.
We will often need the following assumption on the geometry of~$\Theta$:
\begin{equation}
\label{size of parameter space}
\exists C,t > 0\;\forall\eta\in]0,\Delta(\Theta)]:N(\Theta,d_{\Theta},\eta)\leq C\eta^{-t}.
\end{equation}
Furthermore, let $(\cX,d_{\cX})$ be a metric space.
By $\cB(\cX)$ we denote the Borel $\sigma$-algebra on $\cX$. Let
$(X_{\theta})_{\theta\in\Theta}$ be an $\cX$-valued random process
on some probability space $\OFP$, i.e., for all $\theta\in\Theta$, $X_\theta$ is a random element in $(\cX,\cB(\cX))$.
Under a ``Kolmogorov-Chentsov type theorem'' we understand a theorem that, under an appropriate moment condition on the distance $d_{\cX}(X_{\theta},X_{\vartheta})$ for $\theta,\vartheta\in\Theta$, yields existence of a continuous or H\"older-continuous modification (cf.\ \cite{Chentsov1956}).
We establish the following general result.

\begin{theorem}
\label{Kolmogorov-Chentsov}
Assume~(\ref{size of parameter space}) and
\begin{equation}\label{eq:14122019a1}
(X_\theta,X_\vartheta)\text{ is }\cF|\cB(\cX^2)\text{-measurable for all pairs }(\theta,\vartheta)\in\Theta^2\text{ with }\theta\ne\vartheta.
\end{equation}
Let $M,p>0$ and $q>t$
(with $t$ from~(\ref{size of parameter space}))
be such that
\begin{equation}
\label{increment inequality}
\EEE\left[~d_{\cX}(X_{\theta},X_{\vartheta})^{p}~\right]\leq M d_{\Theta}(\theta,\vartheta)^{q}\quad\mbox{for}~\theta,\vartheta\in\Theta.
\end{equation}
Then, for any $\beta\in]0,(q - t)/p[$, there exists a finite constant
$\overline L(\Theta,C,t,p,q,\beta)$ dependent on $\Delta(\Theta), C, t, p, q$ and $\beta$ only
such that,
for every at most countable subset $\overline{\Theta}\subseteq\Theta$ with $\Delta(\overline\Theta) > 0$,
\begin{equation}
\label{Hoelder-constant2}
\EEE\left[\sup_{\theta,\vartheta\in\overline{\Theta}\atop \theta\not=\vartheta}~\frac{d_{\cX}(X_{\theta}, X_{\vartheta})^{p}}{d_{\Theta}(\theta,\vartheta)^{\beta p}}\right]\leq
L(\Theta,C,t,M,p,q,\beta):=M\overline L(\Theta,C,t,p,q,\beta).
\end{equation}
In particular, if $d_{\cX}$ is complete, then the random process
$(X_{\theta})_{\theta\in\Theta}$
has a modification which satisfies~(\ref{eq:14122019a1}) such that all its paths are H\"older-continuous of all orders $\beta\in]0,(q-t)/p[$.
\end{theorem}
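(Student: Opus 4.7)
The proof is a dyadic chaining argument, combined with a refined Talagrand-style tree construction that exploits~\eqref{size of parameter space} to achieve the sharp Hölder exponent. First, \eqref{eq:14122019a1} together with Borel measurability of $d_\cX$ shows that $d_\cX(X_\theta,X_\vartheta)$ is a genuine random variable for $\theta\neq\vartheta$, so the sup in~\eqref{Hoelder-constant2} is measurable; by monotone convergence it suffices to prove the bound uniformly over all finite $\overline\Theta\subseteq\Theta$ with $\Delta(\overline\Theta)>0$, which I fix from now on, writing $\Delta := \Delta(\Theta)$.

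Using~\eqref{size of parameter space} I would build nested finite nets $T_0\subseteq T_1\subseteq\cdots$, with $T_0$ a single point, $T_n$ being $2^{-n}\Delta$-dense of cardinality $|T_n|\leq C'2^{nt}$, and $T_{n^\ast}\supseteq\overline\Theta$ for some $n^\ast$. A consistent nearest-point projection $\pi_n:\Theta\to T_n$ arranged along a tree, $\pi_{n-1}=\pi_{n-1}\circ\pi_n$, satisfies $d_\Theta(\theta,\pi_n(\theta))\leq 2^{-n}\Delta$ and $\pi_{n^\ast}=\mathrm{id}$ on $\overline\Theta$. The driving quantity is the aggregated level-$n$ jump
$$J_n^p := \sum_{\tau\in T_n} d_\cX(X_\tau,X_{\pi_{n-1}(\tau)})^p,$$
for which~\eqref{increment inequality} and the cardinality bound yield $\EEE[J_n^p]\leq \widetilde K\,2^{-n(q-t)}$. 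Since $J_n$ dominates every individual tree edge at level $n$, telescoping gives $d_\cX(X_\theta,X_{\pi_{n_0}(\theta)})\leq \sum_{n>n_0}J_n$ for each $\theta\in\overline\Theta$ and $n_0\leq n^\ast$.

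For $\theta\neq\vartheta\in\overline\Theta$, pick $n_0$ with $2^{-(n_0+1)}\Delta<d_\Theta(\theta,\vartheta)\leq 2^{-n_0}\Delta$; the triangle inequality gives $d_\cX(X_\theta,X_\vartheta)\leq 2\sum_{n>n_0}J_n + d_\cX(X_{\pi_{n_0}(\theta)},X_{\pi_{n_0}(\vartheta)})$. The main obstacle is the last, close-pair term: a naive maximum over all close pairs in $T_{n_0}$ contributes a quadratic $|T_{n_0}|^2$ factor and leaves only the weaker exponent $\beta<(q-2t)/p$. To recover the full range $\beta<(q-t)/p$, I would engineer the tree so that the two chains $\pi_n(\theta)$ and $\pi_n(\vartheta)$ share a common ancestor at a deep level $m\geq n_0-O(1)$ whenever $d_\Theta(\theta,\vartheta)\leq 2^{-n_0}\Delta$; on this tree the close-pair term telescopes as $\leq 2\sum_{n=m+1}^{n_0}J_n$, with linear-in-$|T_n|$ edge count at every level. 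Dividing through by $d_\Theta(\theta,\vartheta)^\beta\geq (2^{-(n_0+1)}\Delta)^\beta$, then using Minkowski (for $p\geq 1$) or subadditivity of $x\mapsto x^p$ (for $p<1$) to bound the resulting sup over $n_0$ by an $\ell^p$-sum, I obtain
$$\EEE\left[\sup_{\theta\neq\vartheta\in\overline\Theta}\frac{d_\cX(X_\theta,X_\vartheta)^p}{d_\Theta(\theta,\vartheta)^{\beta p}}\right]\leq L'\sum_{n\geq 0}2^{n\beta p}\,2^{-n(q-t)},$$
which is summable precisely when $\beta p<q-t$ and is independent of $\overline\Theta$ and $n^\ast$.

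For the Hölder modification, fix a countable dense $\Theta_0\subseteq\Theta$ (separability follows from total boundedness) and apply the just-proved~\eqref{Hoelder-constant2} to $\Theta_0$: Markov's inequality yields that $(X_\theta)_{\theta\in\Theta_0}$ is almost surely $\beta$-Hölder, hence uniformly continuous. By completeness of $d_\cX$ it extends on this full-measure event to a $\beta$-Hölder $\widetilde X:\Theta\to\cX$. Condition~\eqref{increment inequality} ensures that for any $\theta\in\Theta$ and any sequence $\theta_k\in\Theta_0$ with $\theta_k\to\theta$ we have $X_{\theta_k}\to X_\theta$ in probability while $X_{\theta_k}\to\widetilde X_\theta$ almost surely, hence $\widetilde X_\theta=X_\theta$ a.s.\ for every $\theta$, and $\widetilde X$ is the sought modification.
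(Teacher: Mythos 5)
Your overall architecture (reduction to finite $\overline\Theta$ by monotone convergence, dyadic nets with chaining maps, summation over dyadic distance scales with $\beta p<q-t$, and the construction of the modification via completeness) matches the paper, and the "far" part of your chain as well as the final modification argument are sound. However, the central step of your proof contains a genuine gap: you propose to handle the close-pair term $d_{\cX}(X_{\pi_{n_0}(\theta)},X_{\pi_{n_0}(\vartheta)})$ by engineering the tree so that, whenever $d_\Theta(\theta,\vartheta)\le 2^{-n_0}\Delta$, the chains of $\theta$ and $\vartheta$ merge at a level $m\ge n_0-O(1)$. No such tree exists in general. For any single nested hierarchy of cells there is an unavoidable boundary effect: already for $\Theta=[0,1]$, any level-$1$ partition into at least two cells admits pairs of arbitrarily close points lying on opposite sides of a cell boundary, whose chains therefore diverge at level $1$ while $n_0$ is arbitrarily large. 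The same phenomenon occurs in any metric space rich enough for the theorem to be nontrivial, so the property you need to hold for \emph{all} close pairs simultaneously cannot be arranged, and your close-pair term does not telescope as claimed.

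This is exactly the obstruction that the paper circumvents by a different mechanism, namely Talagrand's pairing construction (Lemma~\ref{TalagrandLemma B.2.7}): instead of forcing deep common ancestors, one builds a set $U\subseteq\overline\Theta\times\overline\Theta$ of cardinality \emph{linear} in ${\textrm card}(\overline\Theta)$ such that every pair at distance $\le c$ can be routed through a single hub $\theta_{l_*}$ with both legs in $U$; the price is that pairs in $U$ may have distance as large as $c\,r$ with $r\sim\log{\textrm card}(\overline\Theta)$. Feeding this into \eqref{increment inequality} produces the $\big[\ln N(\overline\Theta,d_\Theta,\delta/4)\big]^{q}$ factor in Lemma~\ref{Lemma B.2.7}, which is then absorbed in the dyadic summation because $q>t$ strictly. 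So your diagnosis of the difficulty (the naive union bound over close pairs in $T_{n_0}$ is quadratic and only yields $\beta<(q-2t)/p$) is correct, but the proposed cure does not work; you need either Talagrand's linear-cardinality pairing set (accepting a logarithmic loss in the distance) or some equivalent device, and without it the claimed bound \eqref{Hoelder-constant2} is not established.
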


\begin{normalremark}
Technical assumption~(\ref{eq:14122019a1})
is always satisfied when $\cX$ is a separable metric space
because, in this case, $\cB(\cX^2)=\cB(\cX)\otimes\cB(\cX)$.
In general, we only have the inclusion
$\cB(\cX^2)\supseteq\cB(\cX)\otimes\cB(\cX)$,
and the assumption is needed to ensure
measurability of $d_\cX(X_\theta,X_\vartheta)$.
\end{normalremark}

We consider Theorem~\ref{Kolmogorov-Chentsov} as our main ``building block''.
In the literature, Kolmogorov-Chentsov type theorems are sometimes formulated in a localized form.
A localized version of Theorem~\ref{Kolmogorov-Chentsov} where $\Theta$ is not necessarily totally bounded is presented in Section~\ref{sec:localized}.

\begin{normalremark}\label{rem:16122019a1}
The key assumption on the geometry of the parametric space $\Theta$ is~(\ref{size of parameter space}), where the value of $t$ is important, as we need to have $q>t$ in~(\ref{increment inequality}).\footnote{On the contrary, the value of $C$ in~(\ref{size of parameter space}) is not important.}
We remark that,
if $\Theta$ is a bounded subset of $\RRR^{m}$ with the Euclidean metric $d_{m,2} = d_{\Theta}$,
then (\ref{size of parameter space}) is always satisfied with $t=m$.\footnote{More precisely, we have
$N(\overline{\Theta},d_{\Theta},\eta)\leq \big[\big(8 \Delta(\Theta) + \eta\big)/\eta\big]^{m}$ for all $\eta >0$ (see \cite[Lemma 2.5]{vandeGeer2000}). In particular, (\ref{size of parameter space}) holds with $t = m$ and $C = 9 \Delta(\Theta)^{m}$.}
More generally, a relatively compact subset $\Theta$ of an $m$-dimensional connected Riemannian manifold always satisfies~(\ref{size of parameter space}) with $t=m$ (we provide more detail in Section~\ref{sec:Riemann}).
\end{normalremark}

In the classical formulation of the Kolmogorov-Chentsov theorem it is assumed that $\cX$ is a Banach space and $\Theta=[0,1]^m$ for some $m\in\NNN$ (see \cite[Theorem~I.2.1]{RevuzYor1999}), and the proof relies on the fact that the dyadic rationals are dense in $[0,1]$.
Since that time there appeared many other versions
of the Kolmogorov-Chentsov theorem
that essentially allow to treat more general sets $\Theta$.
We mention
\cite[Theorem~2.1]{MittmannSteinwart2003},
\cite[Theorem~3.9]{CoxEtAl2013},
\cite[Lemma~2.19]{GrohsEtAl2018},
\cite[Proposition~3.9]{HuddeEtAl2019}
for several recent formulations where $\Theta$ is a subset of $\RRR^m$.
Some versions of the Kolmogorov-Chentsov theorem only guarantee that
$\sup(d_\cX(X_\theta,X_\vartheta)/d_\Theta(\theta,\vartheta)^\beta)<\infty$ a.s.\
(i.e., it is not claimed that the expectation of the $p$-th power of that quantity is finite).
However, some applications such as the ones discussed in Sections \ref{asymptotic tightness random processes} and~\ref{CLT random processes} below require that the expectation is finite.
As another example of this kind we mention that the proof of Theorem~6.1 in \cite{AKU} would not work without finiteness of such an expectation
(see formula~(106) in \cite{AKU}).

In the aforementioned references,
$\cX$ is (a closed subset of) a Banach space
and all $X_\theta$ are assumed to be in $L^p$
(with $p$ from~(\ref{increment inequality})),
and the proof involves a certain extension result for Banach-valued H\"older-continuous mappings.
That extension result allows to pass from rectangular regions in $\RRR^m$ to general subsets $\Theta\subseteq\RRR^m$.
In our situation when $\cX$ is only a metric space and we do not assume
$\EEE[d_\cX(a,X_\theta)^p]<\infty$ for all $\theta$ and some $a\in\cX$
(or the like) such a method of the proof cannot work,
so we use essentially different ideas to prove Theorem~\ref{Kolmogorov-Chentsov}.

Another approach, used in \cite[Theorem~2.9]{Potthoff2009} (also see \cite[Corollary~4.3]{LangEtAl2016}),
is worth mentioning.
In that reference, the existence of a locally H\"older-continuous modification is proved for $\cX=\RRR$ under assumptions of a different kind.
In particular, the assumption on $\Theta$ is that it is a dyadically separable metric space.
The latter is a requirement of a different type than~(\ref{size of parameter space}) on the geometry of $\Theta$,
which allows to pursue the arguments initially elaborated for rectangular regions in $\RRR^m$ in more general situations.
The setup in \cite{Potthoff2009} is quite different from ours, and the relation between the approaches still has to be worked out.
Notice, however, that in the finite-dimensional situation $\Theta\subseteq\RRR^m$, the other approach imposes some restrictions on possible sets $\Theta$ (see \cite[Theorem~4.1]{Potthoff2009}), while our approach allows for arbitrary sets $\Theta\subseteq\RRR^m$ (see Proposition~\ref{lokalHoelder} and Remark~\ref{rem:270422a1} below).

We thus summarize the previous discussion
by noting that we obtain inequality~(\ref{Hoelder-constant2}),
essentially, only under requirement~(\ref{size of parameter space})
on the geometry of the metric space $\Theta$,
which is satisfied for bounded subsets of $\RRR^m$
(with $t=m$)
and allows to go beyond $\RRR^m$.
It is also worth noting that the right-hand side of~(\ref{Hoelder-constant2}) is the same for all countable subsets $\overline\Theta\subseteq\Theta$,
and that (\ref{Hoelder-constant2}) is the right way to formulate the result in the case when $d_\cX$ is incomplete
(and thus a continuous modification may fail to exist).

\medskip
In order to discuss applications of Theorem~\ref{Kolmogorov-Chentsov}, we formulate the following immediate

\begin{corollary}
\label{tightness with entropy}
Assume (\ref{size of parameter space}), (\ref{eq:14122019a1}) and~(\ref{increment inequality}).
Let $\beta\in ]0,(q-t)/p[$
(with $p$, $q$ from~(\ref{increment inequality}) and $t$ from~(\ref{size of parameter space})),
and let
$L(\Theta,C,t,M,p,q,\beta)$
be any constant satisfying~(\ref{Hoelder-constant2}).
Then, for every at most countable subset $\overline{\Theta}\subseteq\Theta$ and arbitrary $\delta>0$,
\begin{equation}
\label{bound lattice supremum}
\EEE\left[\sup_{\theta, \vartheta\in\overline{\Theta}\atop d(\theta,\vartheta)\leq\delta}d_{\cX}(X_{\theta},X_{\vartheta})^p\right] \leq 
L(\Theta,C,t,M,p,q,\beta)
\delta^{\beta p}.
\end{equation}
\end{corollary}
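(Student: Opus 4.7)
The plan is essentially a one-line reduction to Theorem~\ref{Kolmogorov-Chentsov}. First I would dispose of the degenerate case where $\Delta(\overline{\Theta})=0$, i.e., $\overline{\Theta}$ is empty or a singleton: then the left-hand side of~(\ref{bound lattice supremum}) is $0$ (with the convention $\sup\emptyset=0$ for nonnegative quantities, and using $d_{\cX}(X_\theta,X_\theta)=0$) and the bound is trivial. So assume from now on that $\Delta(\overline{\Theta})>0$, which is exactly the hypothesis needed to invoke~(\ref{Hoelder-constant2}).

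The key step is the pointwise inequality: for any pair $\theta\neq\vartheta$ in $\overline{\Theta}$ with $d_\Theta(\theta,\vartheta)\leq\delta$, we have
\begin{equation*}
d_{\cX}(X_\theta,X_\vartheta)^p \;=\; \frac{d_{\cX}(X_\theta,X_\vartheta)^p}{d_\Theta(\theta,\vartheta)^{\beta p}}\cdot d_\Theta(\theta,\vartheta)^{\beta p} \;\leq\; \frac{d_{\cX}(X_\theta,X_\vartheta)^p}{d_\Theta(\theta,\vartheta)^{\beta p}}\cdot\delta^{\beta p},
\end{equation*}
while pairs with $\theta=\vartheta$ contribute $0$. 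Taking the supremum over $\{\theta,\vartheta\in\overline{\Theta}:d_\Theta(\theta,\vartheta)\leq\delta\}$ on the left and enlarging the supremum on the right to all pairs $\theta\neq\vartheta$ in $\overline{\Theta}$ yields
\begin{equation*}
\sup_{\theta,\vartheta\in\overline{\Theta},\,d_\Theta(\theta,\vartheta)\leq\delta}d_{\cX}(X_\theta,X_\vartheta)^p \;\leq\; \delta^{\beta p}\sup_{\theta,\vartheta\in\overline{\Theta},\,\theta\neq\vartheta}\frac{d_{\cX}(X_\theta,X_\vartheta)^p}{d_\Theta(\theta,\vartheta)^{\beta p}}.
\end{equation*}

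Finally I would take expectations on both sides and apply~(\ref{Hoelder-constant2}) from Theorem~\ref{Kolmogorov-Chentsov} to the second factor. Since $\overline{\Theta}$ is at most countable, the suprema are measurable (as suprema of countable families of nonnegative random variables, using~(\ref{eq:14122019a1})), so no extra measurability issue arises. There is no genuine obstacle here: the inequality~(\ref{Hoelder-constant2}) is deliberately formulated in a form that yields modulus-of-continuity estimates by simple rescaling, and the corollary is indeed immediate.
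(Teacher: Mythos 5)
Your proof is correct and matches the paper's intent exactly: the paper states Corollary~\ref{tightness with entropy} as an immediate consequence of Theorem~\ref{Kolmogorov-Chentsov}, and the intended argument is precisely your rescaling step $d_{\cX}(X_\theta,X_\vartheta)^p\le \delta^{\beta p}\,d_{\cX}(X_\theta,X_\vartheta)^p/d_\Theta(\theta,\vartheta)^{\beta p}$ followed by taking suprema, expectations, and invoking~(\ref{Hoelder-constant2}). Your handling of the degenerate case $\Delta(\overline\Theta)=0$ and the measurability remark are fine.
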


Notice that, like in Theorem~\ref{Kolmogorov-Chentsov}, inequality~(\ref{bound lattice supremum}) holds universally, i.e., independently of the random process satisfying (\ref{eq:14122019a1}) and~(\ref{increment inequality}).
This will turn out to be useful when analyzing weak convergence of $\cX$-valued random processes
(see Sections \ref{asymptotic tightness random processes} and~\ref{CLT random processes}).

\medskip
The crucial step for the proof of Theorem~\ref{Kolmogorov-Chentsov} is provided by the following auxiliary result. It is interesting in its own right.

\begin{lemma}
\label{Lemma B.2.7}
Assume (\ref{size of parameter space}), (\ref{eq:14122019a1}) and~(\ref{increment inequality}).
Let $\overline{\Theta}$ be some finite subset of $\Theta$ with $\Delta(\overline{\Theta}) > 0$.
Then, for any $\delta > 0$,
\begin{align*}
&\EEE\left[\sup_{\theta,\vartheta\in\overline{\Theta}\atop d_{\Theta}(\theta,\vartheta) \leq \delta}d_{\cX}(X_{\theta},X_{\vartheta})^{p}\right]\\
&\leq 
4^{ t + 2 p + 3 q + 2}\cdot M\cdot\left(N(\overline{\Theta},d_{\Theta},\delta/4)~
\left[\ln\big(N(\overline{\Theta},d_{\Theta},\delta/4)\big)\right]^{q}~\delta^{q}~ +~ 
\frac{C }{\big(2^{(q-t)/p} - 1\big)^{p}}~\delta^{q-t}\right). 
 \end{align*}
 \end{lemma}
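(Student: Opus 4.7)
The approach is a dyadic chaining in the style of Talagrand, using the nested family of nets provided by~(\ref{size of parameter space}). Set $\delta_k := (\delta/4)\,2^{-k}$ for $k \geq 0$ and pick, for each $k$, a minimal $\delta_k$-net $\cN_k \subseteq \overline{\Theta}$, writing $N_k := N(\overline{\Theta}, d_\Theta, \delta_k) = |\cN_k|$. Choose projections $\pi_k : \overline{\Theta} \to \cN_k$ with $d_\Theta(\theta, \pi_k(\theta)) \leq \delta_k$, arranged compatibly via $\pi_k = \pi_k \circ \pi_{k+1}$; then each pair $(\pi_k(\theta), \pi_{k+1}(\theta))$ is determined by its endpoint $\pi_{k+1}(\theta) \in \cN_{k+1}$. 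Since $\overline{\Theta}$ is finite, only finitely many $\pi_k$ differ from the identity, so all sums below are effectively finite.

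For pairs $\theta, \vartheta \in \overline{\Theta}$ with $d_\Theta(\theta, \vartheta) \leq \delta$, the triangle inequality through the chain yields
\begin{equation*}
d_\cX(X_\theta, X_\vartheta) \leq d_\cX(X_{\pi_0(\theta)}, X_{\pi_0(\vartheta)}) + \sum_{k \geq 0} \bigl[ d_\cX(X_{\pi_k(\theta)}, X_{\pi_{k+1}(\theta)}) + d_\cX(X_{\pi_k(\vartheta)}, X_{\pi_{k+1}(\vartheta)}) \bigr],
\end{equation*}
with $d_\Theta(\pi_0(\theta), \pi_0(\vartheta)) \leq 3\delta/2$ and $d_\Theta(\pi_k(\theta), \pi_{k+1}(\theta)) \leq 2\delta_k$. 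Taking supremum over admissible pairs and then the $p$-th power and expectation, I invoke Minkowski's inequality (for $p \geq 1$) or the sub-additivity $(a+b)^p \leq a^p + b^p$ (for $0 < p < 1$) to pass the $p$-th moment inside the sum, reducing matters to controlling separately the coarse term $\sup_{a,b \in \cN_0,\, d_\Theta(a,b) \leq 3\delta/2} d_\cX(X_a, X_b)^p$ and the chain maxima $\max_{\theta} d_\cX(X_{\pi_k(\theta)}, X_{\pi_{k+1}(\theta)})^p$ for each $k \geq 0$.

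The fine chain is handled by the standard union bound: at level $k$ there are at most $N_{k+1} \leq C \delta_{k+1}^{-t}$ distinct links (by~(\ref{size of parameter space})), each of increment bounded by $M(2\delta_k)^q$ in $p$-th moment via~(\ref{increment inequality}). Aggregating over $k$ produces a geometric series with ratio $2^{-(q-t)/p} < 1$ (since $q > t$); raising the sum back to the $p$-th power gives the second summand $C\delta^{q-t}/(2^{(q-t)/p} - 1)^p$, up to numerical constants absorbed into the prefactor $4^{t+2p+3q+2}$.

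The crux is the coarse term. A direct union bound over the $N_0^2$ pairs in $\cN_0$ only delivers $N_0^2 M \delta^q$, which is too weak. To reach the required $N_0 [\ln N_0]^q M \delta^q$, I insert a second, internal chaining inside $\cN_0$ in the spirit of Talagrand's generic chaining: build a nested hierarchy of sub-nets of $\cN_0$ whose cardinalities double from level to level, so that after $J \sim \log_2 N_0$ steps every point of $\cN_0$ is individually separated; at each intermediate level a union bound controls the max over chain links (whose $d_\Theta$-lengths remain comparable to $\delta$), and aggregation via Minkowski, together with the $q$-th power appearing in~(\ref{increment inequality}), produces the factor $[\ln N_0]^q$ once the sum is raised to the $p$-th power. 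This internal chaining is the main technical obstacle; once it is in place, combining the two bounds and tracking the constants yields the stated inequality.
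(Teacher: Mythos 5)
Your fine-chain analysis is sound and matches the paper's Lemma~\ref{chaining lemma}: dyadic nets, a union bound over at most $N(\overline\Theta,d_\Theta,2^{-(k+1)})\le C\,2^{t(k+1)}$ links per level, Minkowski (or subadditivity of $x\mapsto x^p$ for $p<1$) across levels, and a geometric series with ratio $2^{-(q-t)/p}$ producing the $C\delta^{q-t}/(2^{(q-t)/p}-1)^p$ summand. You also correctly identify the coarse term as the crux and correctly observe that a naive union bound over $N_0^2$ pairs is too weak.

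The gap is precisely in the mechanism you propose for that coarse term, and it is not a matter of unfinished bookkeeping. A doubling hierarchy $S_0\subseteq S_1\subseteq\dots\subseteq S_J=\cN_0$ with $|S_j|\approx 2^j$ cannot have chain links of length comparable to $\delta$: a subset of cardinality $2^j<N_0=N(\overline\Theta,d_\Theta,\delta/4)$ cannot approximate $\cN_0$ at scale $\approx\delta$, so the links $d_\Theta(\sigma_{j+1}(a),\sigma_j(a))$ at the coarser levels are forced to grow (under~(\ref{size of parameter space}) they are of order $(C/2^j)^{1/t}$), and feeding these into~(\ref{increment inequality}) collapses the computation back into the standard Kolmogorov--Chentsov chain, i.e.\ into the $\delta^{q-t}$ term; the target summand $N_0[\ln N_0]^q\delta^q$ never appears. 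Moreover, even if the links could be kept at length $\delta$, aggregating $J\sim\log_2 N_0$ levels via Minkowski would produce a factor $(\log N_0)^p$, not $(\log N_0)^q$. The paper's actual device (Lemma~\ref{TalagrandLemma B.2.7}, after Talagrand) is of a different nature: a greedy ball-removal construction yields a \emph{single} set $U$ of pairs with ${\textrm card}(U)\le 2\,{\textrm card}(\cN_0)$ (linear, by the counting $\sum_l A^{r_l}\le A\,{\textrm card}$), whose links are allowed to be \emph{long} --- of length up to $c\,\overline r$ with $\overline r\sim\log_2 N_0$ --- and such that every pair at distance $\le c$ factors through a common center via two pairs of $U$ (properties (\ref{cardinality U})--(\ref{maximal increments})). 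A single union bound over $U$ then gives $2N_0\cdot M\,(c\,\overline r)^q$, and the $[\ln N_0]^q$ comes from the $q$-th power of the \emph{elongated link length}, not from summing over levels. This construction is the missing idea, and without it (or an equivalent) your argument does not reach the first summand of the claimed bound.
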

 
In the case $\cX = \RRR$, Theorem~B.2.4 in~\cite{Talagrand2014} provides a result similar to Lemma~\ref{Lemma B.2.7}. For the proof, a refined chaining technique is used there, which we shall adopt to derive  Lemma~\ref{Lemma B.2.7}.

\medskip
The structure of the paper is as follows.
In Section~\ref{sec:localized} we discuss a localized version of Theorem~\ref{Kolmogorov-Chentsov} where $\Theta$ is not necessarily totally bounded.
As an example, in Section~\ref{sec:Riemann} we explicitly treat the case where $\Theta$ is a subset of a Riemannian manifold.
In Sections \ref{asymptotic tightness random processes} and~\ref{CLT random processes} we present some applications of Theorem~\ref{Kolmogorov-Chentsov} to weak convergence of Banach-valued processes.
Lemma~\ref{Lemma B.2.7} and Theorem~\ref{Kolmogorov-Chentsov} are proved in Section~\ref{proofs}.

\section{Localized version of Theorem~\protect\ref{Kolmogorov-Chentsov}}\label{sec:localized}
Since the literature in the case $\Theta\subseteq\RRR^m$ sometimes formulates Kolmogorov-Chentsov type theorems for unbounded $\Theta$ (by localizing the results of the type of Theorem~\ref{Kolmogorov-Chentsov}),
we now formulate and discuss the localized version of Theorem~\ref{Kolmogorov-Chentsov} for metric spaces $(\Theta,d_\Theta)$ that are not necessarily totally bounded.

The setting is as follows.
Let $(\Theta,d_\Theta)$ be a metric space satisfying

\smallskip\noindent
\textbf{Property~(P)}
There exists an increasing sequence $\{\Theta_n\}_{n\in\NNN}$, $\Theta_n\subseteq\Theta_{n+1}$, $n\in\NNN$, of totally bounded open subsets of $\Theta$ such that $\Theta=\bigcup_{n\in\NNN}\Theta_n$ and
\begin{equation}\label{eq:270422a1}
\forall n\in\NNN\;\exists C_n,t_n > 0\;\forall\eta\in]0,\Delta(\Theta_n)]:N(\Theta_n,d_{\Theta},\eta)\leq C_n\eta^{-t_n}.
\end{equation}

Let $(\cX,d_{\cX})$ be a complete metric space and let $(X_{\theta})_{\theta\in\Theta}$ be an $\cX$-valued random process
on some $\OFP$. 

\begin{proposition}\label{lokalHoelder}
Assume Property~(P), that the process $(X_\theta)_{\theta\in\Theta}$ satisfies~(\ref{eq:14122019a1}) and that, for all $n\in\NNN$, there exist $M_n,p_n, \rho_{n}>0$ and $q_n>t_n$ (with $t_n$ as in Property~(P)) such that
\begin{equation}
\label{vereinfachte Inkrementenbedingung}
\EEE\left[~d_{\cX}(X_{\theta},X_{\vartheta})^{p_n}~\right]\leq M_n~d_{\Theta}(\theta,\vartheta)^{q_n}\quad\text{for }\theta,\vartheta\in\Theta_n,~d_{\Theta}(\theta,\vartheta) < \rho_{n},\;n\in\NNN.
\end{equation}
Then the random process $(X_{\theta})_{\theta\in\Theta}$ has a modification $(\widetilde{X}_{\theta})_{\theta\in\Theta}$ satisfying~(\ref{eq:14122019a1}) such that all its paths are locally H\"older-continuous of all orders $\beta\in\bigcap_{n\in\NNN}[0,(q_n - t_n)/p_n[$,
where the expression ``H\"older-continuous of order $0$'' is understood as ``uniformly continuous''. Moreover, for $n\in\NNN$, $\overline{\theta}\in\Theta_{n}$, there is some open in $\Theta$ neighbourhood $V(\overline{\theta})$ of $\overline{\theta}$ such that
\begin{equation}\label{eq:011022a1}
\EEE\left[\sup_{\theta,\vartheta\in V(\overline{\theta}) \atop \theta\not=\vartheta}\frac{d_{\cX}(\widetilde{X}_{\theta},\widetilde{X}_{\vartheta})^{p_{n}}}{d_{\Theta}(\theta,\vartheta)^{\beta p_{n}}}\right] < \infty\quad\mbox{for all }\beta\in[0,(q_n - t_n)/p_n[.
\end{equation}
\end{proposition}

\begin{normalremark}\label{rem:270422a1}
Notice that any $\Theta\subseteq\RRR^m$ satisfies Property~(P).
We can take\footnote{We remark that, although such sets $\Theta_n$ are in general not open in $\RRR^m$, they are always open in $\Theta$, as required in Property~(P).}
$\Theta_n=\Theta\cap(\,]-n,n[^m)$ and all $t_n=m$, $n\in\NNN$, whereas the constants $C_n$ indeed depend on $n$.
Therefore, in the case of an arbitrary subset $\Theta\subseteq\RRR^m$ Proposition~\ref{lokalHoelder} includes, e.g., the following statement:
There is a locally H\"older-continuous modification
of all orders $\beta\in]0,(q-m)/p[$
whenever \eqref{vereinfachte Inkrementenbedingung} holds with
$\Theta_n=\Theta\cap(]-n,n[^m)$, $p_n=p>0$ and $q_n=q>m$ not depending on~$n$
(on the contrary, $M_n$ and $\rho_n$ are allowed to depend on~$n$).
Moreover, in this case, for any $\overline\theta\in\Theta$, there exists an open in $\Theta$ neighbourhood $V(\overline\theta)$ of $\overline\theta$
such that \eqref{eq:011022a1} with $p_n\equiv p$ holds for all $\beta\in]0,(q-m)/p[$.
\end{normalremark}

Although Proposition~\protect\ref{lokalHoelder} follows from Theorem~\ref{Kolmogorov-Chentsov} via standard arguments, we present a proof to make the paper self-contained.

\medskip\noindent
\textbf{Proof of Proposition~\ref{lokalHoelder}}
Fix any $n\in\NNN$.
The set $\Theta_{n}$ from Property~(P) is totally bounded.
Therefore, we can find open subsets $\Theta_{n,1},\ldots,\Theta_{n,r_{n}}$ of $\Theta$ with diameters less than $\rho_{n}$ such that 
$$
\Theta_{n} = \bigcup_{i=1}^{r_{n}}\overline\Theta_{n,i},
$$
where $\overline\Theta_{n,i}=\Theta_{n}\cap\Theta_{n,i}$.
By~(\ref{vereinfachte Inkrementenbedingung}) we can apply Theorem~\ref{Kolmogorov-Chentsov} on each $\overline\Theta_{n,i}$.
Hence each $(X_{\theta})_{\theta\in \overline\Theta_{n,i}}$ has a modification $(\overline{X}^{n,i}_{\theta})_{\theta\in\overline\Theta_{n,i}}$ which satisfies (\ref{eq:14122019a1}) such that all its paths are H\"older-continuous on $\overline\Theta_{n,i}$ of all orders $\beta\in[0,(q_n-t_n)/p_n[$ with 
$$
\EEE\left[\sup_{\theta,\vartheta\in\overline\Theta_{n,i}\atop \theta\ne\vartheta}\frac{d_{\cX}\big(\overline{X}^{n,i}_{\theta}, \overline{X}^{n,i}_{\vartheta}\big)^{p_{n}}}{d_{\Theta}(\theta,\vartheta)^{\beta p_{n}}}\right] < \infty\quad\text{for all }\beta\in[0,(q_n-t_n)/p_n[.
$$
If $i,j\in\{1,\ldots,r_n\}$ are such that $\overline{\Theta}_{n,i}\cap\overline{\Theta}_{n,j}\ne\emptyset$,
then the processes
$(\overline{X}^{n,i}_{\theta})_{\theta\in \overline{\Theta}_{n,i}\cap\overline{\Theta}_{n,j}}$ and
$(\overline{X}^{n,j}_{\theta})_{\theta\in \overline{\Theta}_{n,i}\cap\overline{\Theta}_{n,j}}$ are indistinguishable,
as they are both continuous, modifications of each other
and $\overline{\Theta}_{n,i}\cap\overline{\Theta}_{n,j}$ is separable (because totally bounded).
Using this, it is straightforward to construct  a modification $(\overline X^n_\theta)_{\theta\in\Theta_n}$ of $(X_\theta)_{\theta\in\Theta_n}$ which satisfies (\ref{eq:14122019a1}) such that all its paths are H\"older-continuous of all orders $\beta\in[0,(q_n-t_n)/p_n[$ on each $\overline\Theta_{n,i}$ with
\begin{equation}
\label{zweiStern}
\EEE\left[\sup_{\theta,\vartheta\in\overline\Theta_{n,i}\atop \theta\ne\vartheta}\frac{d_{\cX}\big(\overline{X}^{n}_{\theta}, \overline{X}^{n}_{\vartheta}\big)^{p_{n}}}{d_{\Theta}(\theta,\vartheta)^{\beta p_{n}}}\right] < \infty\quad\text{for all }\beta\in[0,(q_n-t_n)/p_n[\text{ and }i\in\{1,\ldots,r_{n}\}.
\end{equation}

Now we vary $n\in\NNN$.
Recall that $\Theta_n\subseteq\Theta_{n+1}$.
Since $(\overline X^n_\theta)_{\theta\in\Theta_n}$ and $(\overline X^{n+1}_\theta)_{\theta\in\Theta_n}$ are modifications of each other, both continuous and $\Theta_n$ is separable,
then $(\overline X^n_\theta)_{\theta\in\Theta_n}$ and $(\overline X^{n+1}_\theta)_{\theta\in\Theta_n}$ are indistinguishable.
Therefore, there exists $\Omega_n\in\cF$ with $\pr(\Omega_n)=1$ such that, for all $\omega\in\Omega_n$ and $\theta\in\Theta_n$, it holds
$\overline X^n_\theta(\omega)=\overline X^{n+1}_\theta(\omega)$.
We define $\Omega_\infty=\bigcap_{n\in\NNN}\Omega_n$ and 
the process $(\widetilde X_\theta)_{\theta\in\Theta}$ by the formula
\begin{equation*}
\label{Musterprozess}
\widetilde X_\theta(\omega)=\begin{cases}
\overline X^n_\theta(\omega),&\omega\in\Omega_\infty,\;\theta\in\Theta_n\setminus\Theta_{n-1},\;n\in\NNN,\\
\bar x,&\omega\notin\Omega_\infty,
\end{cases}
\end{equation*}
where $\Theta_0:=\emptyset$ and $\bar x\in\cX$ is arbitrary.
This is a modification of $(X_\theta)_{\theta\in\Theta}$ which satisfies (\ref{eq:14122019a1}), and all its paths are H\"older-continuous of all orders $\beta\in[0,(q_n - t_n)/p_n[$ on each $\overline\Theta_{n,i}$, $n\in\NNN$, $i\in\{1,\ldots,r_{n}\}$.
Recalling from Property~(P) that each $\Theta_n$ is open in $\Theta$, we get that each point $\theta\in\Theta$ belongs to some open subset $\overline\Theta_{n,i}$ of $\Theta$ (for some $n\in\NNN$ and $i\in\{1,\ldots,r_{n}\}$).
In particular, all paths of $(\widetilde X_\theta)_{\theta\in\Theta}$ are locally H\"older-continuous of all orders $\beta\in\bigcap_{n\in\NNN}[0,(q_n - t_n)/p_n[$, while the last statement of Proposition~\ref{lokalHoelder} follows from~\eqref{zweiStern}.\hfill$\Box$

\section{Example: Subsets of Riemannian manifolds}\label{sec:Riemann}

In this section we discuss applicability of Theorem~\ref{Kolmogorov-Chentsov} and Proposition~\ref{lokalHoelder} in the setting when $\Theta$ is a subset of an $m$-dimensional connected Riemannian manifold $M$.
More precisely, we are going to understand restrictions~(\ref{size of parameter space}) and Property~(P) on $\Theta$ in this setting.
Essentially, the results are:
\begin{itemize}
\item
Every relatively compact $\Theta\subseteq M$ satisfies~(\ref{size of parameter space}) with $t=m$ (Proposition~\ref{prop:250822a1});
\item
Every $\Theta\subseteq M$ satisfies Property~(P) with $t_n=m$, $n\in\NNN$ (Corollary~\ref{cor:250822a1}).
\end{itemize}
For basic concepts and results from differential geometry we refer to standard textbooks, e.g., \cite{Dieudonne1972}, \cite{doCarmo1992}, \cite{Jost2011} and \cite{KobayashiNomizu1963}.

\medskip
Let $(M,g)$ be any connected $m$-dimensional Riemannian manifold as defined in \cite{doCarmo1992}. This means that $M$ denotes an $m$-dimensional $C^{\infty}$-manifold endowed with the Riemannian metric $g$. By definition $g$ is a mapping which associates to each point $p\in M$ an inner product $g_{p}$ on the tangential space $T_{p}M$ at $p$ such that for $C^{\infty}$-vector fields $\mathcal{V}, \mathcal{W}$ on an open subset $G$ of $M$ the mapping
$$
G\to\RRR,~p\mapsto g_{p}(\mathcal{V}_{p},\mathcal{W}_{p}) 
$$
is differentiable of class $C^{\infty}$. Furthermore, let for $p,q$ denote by $\mathcal{C}_{pq}$ the set of all $C^{\infty}$-curves in $M$ joining $p$ to $q$. The length $L(c)$ of a curve $c\in\mathcal{C}_{pq}$ defined on the closed interval $I_{c}$ of $\RRR$ is
$$
L(c) := \int_{I_{c}}\sqrt{g_{c(t)}\big(c'(t),c'(t)\big)}~dt,
$$
where $c'(t)$ stands for the velocity of $c$ at $t$. Since $M$ is connected, the sets $\mathcal{C}_{pq}$ are always nonvoid (see \cite[p. 146]{doCarmo1992}), and the mapping
$$
d_{g}:M\times M\rightarrow\RRR,~(p,q)\mapsto\inf_{c\in \mathcal{C}_{pq}}L(c)
$$
is a metric on $M$ (see \cite[Proposition 7.2.5]{doCarmo1992}) sometimes called the \textit{inner metric} (\textit{induced by} $g$). Moreover, the topology induced by this metric coincides with the original topology on $M$ (see \cite[Proposition 7.2.6]{doCarmo1992}).

\begin{proposition}\label{prop:250822a1}
(i) Let $\Theta$ be any relatively compact subset of $M$.
Then there exist a compact subset 
$K_{m}$ of $\RRR^{m}$ as well as $r\in\NNN$ and $\delta > 0$ such that 
$$
N(\Theta,d_{g},\eta)\leq r N(K_{m},d_{m,2},\eta/\delta)\quad\text{for all }\eta > 0,
$$
where $d_{m,2}$ stands for the Euclidean metric on $\RRR^{m}$.
As a consequence, $\Theta$ satisfies condition~(\ref{size of parameter space}) with $t = m$ w.r.t.\ the metric $d_{g}$.

\smallskip
(ii) If $d_{g}$ is complete, then every $d_g$-bounded subset $\Theta$ of $M$ satisfies~(\ref{size of parameter space}) with $t = m$ w.r.t.\ the metric $d_{g}$.
\end{proposition}

\begin{corollary}\label{cor:250822a1}
Every $\Theta\subseteq M$ satisfies Property~(P) with $t_{n} = m$, $n\in\NNN$, w.r.t.\ the metric $d_{g}$.
\end{corollary}

\begin{proof}
Since $M$ is a $C^{\infty}$-manifold, we can find an open covering $\{\overline{\Theta}_n\}_{n\in\NNN}$ of $M$ consisting of relatively compact subsets of $M$ and satisfying $\overline{\Theta}_n\subseteq\overline{\Theta}_{n+1}$ for $n\in\NNN$ (see, e.g., \cite[(16.1.4)]{Dieudonne1972}).
By Proposition~\ref{prop:250822a1} this sequence of subsets satisfies~(\ref{eq:270422a1}) w.r.t.\ $d_{g}$ with $t_{n} = m$ for $n\in\NNN$ (and the constants $C_{n}$ indeed depend on $n$).
Hence every $\Theta\subseteq M$ satisfies Property~(P) with $t_{n} = m$, $n\in\NNN$, w.r.t.\ $d_{g}$, as we can choose\footnote{Notice that, although such sets $\Theta_n$ are in general not open in $M$, they are always open in $\Theta$, as required in Property~(P).}
$\Theta_{n} := \Theta\cap\overline{\Theta}_{n}$, $n\in\NNN$.
\end{proof}

In the rest of this section we prove Proposition~\ref{prop:250822a1}.
The proof is based on a couple of auxiliary results.

\begin{lemma}
\label{gutartige Kompaktheit}
Let $\overline{\Theta}$ be a nonvoid compact subset of $M$
and assume $\overline\Theta\subseteq G$, where $G$ is an open subset of $M$ allowing a chart $u\colon G\to\RRR^m$ which satisfies that $u(\overline{\Theta})$ is convex.
Then there is some $\delta > 0$ such that 
$$
N(\overline{\Theta},d_{g},\eta)\leq N\big(u(\overline{\Theta}),d_{m,2},\eta/\delta\big)\quad\mbox{for}~\eta > 0.
$$
\end{lemma}

\begin{proof}
Let $\{e_{1},\ldots,e_{m}\}$ stand for the standard basis on $\RRR^{m}$. For any $C^{\infty}$-mapping $g:\cU\rightarrow\RRR$ on some open subset $\cU$ of $\RRR^{m}$ we shall use notation $d_{x}g$ to denote the differential of $g$ at $x\in\cU$.

Let us introduce for $p\in G$ the set $\cC_{M}^{\infty}(p)$ of all real-valued $C^{\infty}$-mappings on some open neigbourhood of $p$. By definition, the tangential space $T_{p}M$ of $M$ at $p$ consists of real-valued mappings on $\cC_{M}^{\infty}(p)$. The chart $u$ provides the following basis of $T_{p}M$
$$
\frac{\partial}{\partial u_{i}}\big|_{p}: \cC_{M}^{\infty}(p)\rightarrow\RRR,~\varphi\mapsto d_{p}(\varphi\circ u^{-1})(e_{i})\quad(i\in\{1,\ldots,m\})
$$
(see \cite[p.8]{doCarmo1992}). Moreover, 
$$
\Big(\frac{\partial}{\partial u_{1}},\ldots,\frac{\partial}{\partial u_{m}}\Big): G\rightarrow\bigcup_{p\in G}T_{p}M,~p\mapsto \Big(\frac{\partial}{\partial u_{1}}\big|_{p},\ldots,\frac{\partial}{\partial u_{m}}\big|_{p}\Big)
$$
defines some $C^{\infty}$-vector field (see \cite[25f.]{doCarmo1992}).

Next, let for $x\in u(G)$ denote by $d_{x}u^{-1}$ the differential of $u^{-1}$ at $x$ which is a linear mapping from $\RRR^{m}$ into $T_{u^{-1}(x)}M$ satisfying
$$
d_{x}u^{-1}(e_{i}) = \frac{\partial}{\partial u_{i}}\big|_{u^{-1}(x)}\quad\mbox{for}~i=1,\ldots,m.
$$
Since $g_{u^{-1}(x)}$ is an inner product on $T_{u^{-1}(x)}M$, we may observe for any 
$v = (v_{1},\ldots,v_{m})\in\RRR^{m}$
$$
g_{u^{-1}(x)}\big(d_{x}u^{-1}(v),d_{x}u^{-1}(v)\big) = \sum_{i,j=1}^{m}v_{i}~ v_{j}~g_{u^{-1}(x)}\Big(\frac{\partial}{\partial u_{i}}\big|_{u^{-1}(x)},\frac{\partial}{\partial u_{j}}\big|_{u^{-1}(x)}\Big).
$$
Then, with $S^{m-1}$ denoting the Euclidean sphere in $\RRR^{m}$, we may conclude from the defining properties of the Riemannian metric $g$ that the mapping 
$$
f: u(\overline{\Theta})\times S^{m-1}\rightarrow\RRR,~(x,v)\mapsto\sqrt{g_{u^{-1}(x)}\big(d_{x}u^{-1}(v),d_{x}u^{-1}(v)\big)}
$$
is continuous with strictly positive outcomes. Moreover, its domain is a compact subset 
of $\RRR^{m}\times\RRR^{m}$ so that it attains its maximum $\delta$ which is a positive number.

Now, let $p, q\in \overline{\Theta}$ with $p\not= q$. Since $u(\overline{\Theta})$ is assumed to be convex, the mapping 
$$
\overline{c}:[0,1]\rightarrow \RRR^{m},~t\mapsto t u(q) + (1-t) u(p)
$$
is a $C^{\infty}$-curve in $\RRR^{m}$ satisfying $\overline{c}(t)\in u(\overline{\Theta})$ for $t\in [0,1]$. Then $c:= u^{-1}\circ\overline{c}\in\mathcal{C}_{pq}$, and by chain rule
$$
c'(t) = d_{\overline{c}(t)}u^{-1}\big(u(q) - u(p)\big)\quad t\in [0,1].
$$
Since $g_{u^{-1}\left(\overline{c}(t)\right)}$ is an inner product on $T_{u^{-1}\big(\overline{c}(t)\big)}M$ and $d_{\overline{c}(t)}u^{-1}$ is linear for every $t\in [0,1]$, we obtain 
\begin{align*}
\sqrt{g_{u^{-1}\left(\overline{c}(t)\right)}\big(c'(t),c'(t)\big)} 
&= 
\|u(p) - u(q)\|_{m,2} f\big(\overline{c}(t), [u(q) - u(p)]/\|u(p) - u(q)\|_{m,2}\big)\\
&
\leq \delta \|u(p) - u(q)\|_{m,2}\quad\mbox{for}~t\in [0,1],
\end{align*}
where $\|\cdot\|_{m,2}$ stands for the Euclidean norm on $\RRR^{m}$. Hence by definition of the inner metric $d_{g}$ we end up with 
$$
d_{g}(p,q)\leq L(c)\leq \delta \|u(p) - u(q)\|_{m,2}.
$$
Since $\delta$ does not depend on $p, q$, we now easily derive the claim of Lemma~\ref{gutartige Kompaktheit}.
\end{proof}

In the next step, using Lemma~\ref{gutartige Kompaktheit}, we prove the result of Proposition~\ref{prop:250822a1} first for compact subsets of~$M$.

\begin{lemma}
\label{Gutartigkeit mit Kompaktheit}
Let $\overline{\Theta}\subseteq M$ be nonvoid and compact. Then there exists a nonvoid compact subset $K_{m}$ of $\RRR^{m}$ as well as $r\in\NNN$ and $\delta > 0$ such that
$$
N(\overline{\Theta},d_{g},\eta)\leq r N\big(K_{m},d_{m,2},\eta/\delta\big)\quad\mbox{for}~\eta > 0.
$$
\end{lemma}

\begin{proof}
For any $p\in\overline{\Theta}$ we may find a chart $u_{p}$, defined on an open subset $G_{u_{p}}$ of $M$, and some $\varepsilon_{p} > 0$ such that $p\in G_{u_{p}}$ and 
$$
B_{\varepsilon_{p}}\big(u_{p}(p)\big) := \{x\in\RRR^{m}\mid d_{m,2}\big(x,u_{p}(p)\big)\leq \varepsilon_{p}\}\subseteq u_{p}(G_{u_{p}}).
$$
Setting $U_{\varepsilon_{p}}\big(u_{p}(p)\big) := \{x\in\RRR^{m}\mid d_{m,2}\big(x,u_{p}(p)\big) <  \varepsilon_{p}\}$ and
$G^{p} := u_{p}^{-1}\left(U_{\varepsilon_{p}}\big(u_{p}(p)\big)\right)$
we observe that $(G^{p})_{p\in\overline\Theta}$ is an open covering of $\overline{\Theta}$
because $U_{\varepsilon_{p}}\big(u_{p}(p)\big)$ is an open subset of $\RRR^{m}$. Hence by compactness of $\overline{\Theta}$ there exist $p_{1},\ldots,p_{r}\in M$ such that
$$
\overline{\Theta}\subseteq \bigcup_{i=1}^{r}G^{p_{i}}\subseteq \bigcup_{i=1}^{r}\Theta^{i},
$$
where $\Theta^{i} := u_{p_{i}}^{-1}\left(B_{\varepsilon_{p_{i}}}\big(u_{p_{i}}(p_{i})\big)\right)$
for $i=1,\ldots,r$. For any $i\in\{1,\ldots,r\}$ the set $\Theta^{i}$ meets the requirements of Lemma \ref{gutartige Kompaktheit}. Hence we may find $\delta_{1},\ldots,\delta_{r} > 0$ such that
$$
N(\Theta_{i},d_{g},\eta)\leq N\left(B_{p_{i}}\big(u_{p_{i}}(p_{i})\big),d_{m,2},\eta/\delta_{i}\right)\quad\mbox{for}~i\in\{1,\ldots,r\}, \eta > 0.
$$
The set 
$$
K_{m} := \bigcup_{i=1}^{r}B_{p_{i}}\big(u_{p_{i}}(p_{i})\big)
$$
is a compact subset of $\RRR^{m}$. Then setting,
$\delta := 4\max\{\delta_{1},\ldots,\delta_{r}\}$,
we end up with
\begin{align*}
N(\overline{\Theta},d_{g},\eta) 
\leq 
\sum_{i=1}^{r}N(\Theta^{i},d_{g},\eta/2)
\leq
\sum_{i=1}^{r}N\big(K_{m},d_{m,2},\eta/(4\delta_{i})\big)
\leq
r N\big(K_{m},d_{m,2},\eta/\delta\big)\quad\mbox{for}~\eta > 0.
\end{align*}
This completes the proof.
\end{proof}

Finally, we are ready to prove Proposition~\ref{prop:250822a1}.

\medskip\noindent
\textbf{Proof of Proposition~\ref{prop:250822a1}}\\[0.1cm]
(i) Let $\Theta$ be a nonvoid relatively compact subset of $M$. The topological closure $\overline{\Theta}$ is compact, and $N(\Theta,d_{g},\eta)\leq N(\overline{\Theta},d_{g},\eta/2)$ holds for every $\eta > 0$.
Therefore, the first claim immediately follows from Lemma~\ref{Gutartigkeit mit Kompaktheit}.

\smallskip\noindent
(ii) If $d_{g}$ is complete, then by the Hopf-Rinow theorem (see, e.g., \cite[Theorem 7.2.8]{doCarmo1992}) every $d_{g}$-bounded subset of $M$ is already relatively compact.
Therefore, the second claim follows from the first one.\hfill$\Box$

\section{Tightness for sequences of random processes}
\label{asymptotic tightness random processes}
Let $(\Theta,d_{\Theta})$ be a compact metric space and $(\cX,d_{\cX})$ a complete metric space. We denote by $\cC(\Theta,\cX)$ the space of all continuous mappings from $\Theta$ into $\cX$ endowed with uniform metric $d_{\infty}$ w.r.t.\ the metric $d_{\cX}$ and the induced Borel $\sigma$-algebra $\cB\big(\cC(\Theta,\cX)\big)$.

Some of the results we are going to present simplify in the case when $\cC(\Theta,\cX)$ is separable (hence Polish, as it is complete).
For some discussions below we recall that, as $\Theta$ is compact, $\cC(\Theta,\cX)$ is separable \emph{if and only if} $\cX$ is separable (see \cite[Lemma~3.99]{AliprantisBorder2006}).
We, however, stress at this point that we \emph{never assume} $\cX$ (equivalently, $\cC(\Theta,\cX)$) to be separable.

\medskip
Let us fix any sequence $(X_{n})_{n\in\NNN}$ of Borel random elements $X_{n}\colon\Omega\to\cC(\Theta,\cX)$ on some probability space $\OFP$.
We show how Corollary~\ref{tightness with entropy} leads to a sufficient condition for uniform tightness in $\cC(\Theta,\cX)$.

\begin{proposition}
\label{vereinfachtes Kriterium fuer Straffheit}
Let $\Theta$ fulfill property (\ref{size of parameter space}) with constants $C, t > 0$.
Let $\Theta'\subseteq\Theta$ be dense in $\Theta$.
Assume that
$$
\big(X_{n}(\cdot,\theta)\big)_{n\in\NNN}
\text{ is a uniformly tight sequence of random elements in }(\cX,\cB(\cX)),
\text{ for all }\theta\in\Theta',
$$
and that there exist $M,p>0$ and $q>t$ such that
\begin{equation}
\label{glm Inkremente Ungleichung}
\sup_{n\in\NNN}\EEE\left[~d_{\cX}\big(X_{n}(\cdot,\theta), X_{n}(\cdot,\vartheta)\big)^{p}~\right]\leq M~d_{\Theta}(\theta,\vartheta)^{q}\quad\mbox{for}~\theta, \vartheta\in\Theta.
\end{equation}
Then $(X_{n})_{n\in\NNN}$ is a uniformly tight sequence of Borel random elements in $\cC(\Theta,\cX)$.
\end{proposition}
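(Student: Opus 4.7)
The plan is the classical Arzelà--Ascoli-type tightness argument in $\cC(\Theta,\cX)$, with Corollary~\ref{tightness with entropy} providing a modulus-of-continuity bound that is \emph{uniform in $n$}. The measurability hypothesis~(\ref{eq:14122019a1}) is automatic for each process $\theta\mapsto X_n(\cdot,\theta)$, because the evaluation pairs $f\mapsto(f(\theta),f(\vartheta))$ are continuous, hence Borel, on $\cC(\Theta,\cX)$. Fixing any $\beta\in\,]0,(q-t)/p[$ and applying Corollary~\ref{tightness with entropy} together with~(\ref{glm Inkremente Ungleichung}), one obtains a \emph{single} constant $L=L(\Theta,M,q,C,t,\beta)$ such that for every countable $\overline{\Theta}\subseteq\Theta$, every $\delta>0$, and every $n\in\NNN$,
$$
\EEE\Bigl[\sup_{\theta,\vartheta\in\overline{\Theta},\,d_\Theta(\theta,\vartheta)\leq\delta}d_\cX(X_n(\theta),X_n(\vartheta))^p\Bigr]\leq L\,\delta^{\beta p}.
$$
Pick $\Theta_0=\{\theta_j\}_{j\in\NNN}\subseteq\Theta'$ countable and dense in $\Theta$ (possible since $\Theta$ is compact and $\Theta'$ is dense in $\Theta$). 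By pathwise continuity of $X_n$, the supremum over $\Theta_0$ equals the full modulus $\omega_n^\delta:=\sup_{d_\Theta(\theta,\vartheta)\leq\delta}d_\cX(X_n(\theta),X_n(\vartheta))$, so the bound above applies to $\omega_n^\delta$ for all $n$ simultaneously.

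Fix $\epsilon>0$. Using pointwise uniform tightness of $(X_n(\cdot,\theta_j))_n$, pick compact $K_j\subseteq\cX$ with $\pr(X_n(\cdot,\theta_j)\notin K_j)\leq\epsilon\,2^{-j-1}$ for every $n$. Using Markov's inequality on the modulus bound, choose $\delta_k\downarrow 0$ sufficiently fast so that $\pr(\omega_n^{\delta_k}>1/k)\leq\epsilon\,2^{-k-1}$ for every $n$. Define
$$
A:=\Bigl\{f\in\cC(\Theta,\cX)\,:\,f(\theta_j)\in K_j\ \forall j,\ \sup_{d_\Theta(\theta,\vartheta)\leq\delta_k}d_\cX(f(\theta),f(\vartheta))\leq 1/k\ \forall k\Bigr\}.
$$
A union bound then yields $\pr(X_n\notin A)\leq\epsilon$ uniformly in $n$.

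It remains to show that $A$ is relatively compact in $(\cC(\Theta,\cX),d_\infty)$. Closedness follows because every defining condition survives uniform convergence. For relative compactness I would appeal to the Arzelà--Ascoli theorem for maps from a compact space $\Theta$ into a complete metric space $\cX$: the set $A$ is equicontinuous by the modulus conditions, and the evaluation $\{f(\theta):f\in A\}\subseteq\cX$ is relatively compact for every $\theta\in\Theta$ -- at the $\theta_j$ by inclusion in the compact $K_j$, and at an arbitrary $\theta$ by approximating with a nearby $\theta_j$, using equicontinuity to inherit total boundedness and completeness of $\cX$ to pass from total boundedness to relative compactness. As $\epsilon>0$ is arbitrary, $(X_n)_n$ is uniformly tight in $\cC(\Theta,\cX)$.

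I expect the main obstacle to be purely the final step, caused by the fact that $\cX$ need not be separable: this forces me to use the version of Arzelà--Ascoli that does not require a separable target and to invoke completeness of $\cX$ to transfer total boundedness at a dense set of parameters to relative compactness at every parameter. The probabilistic content -- assembling the compact set $A$ from pointwise tightness at $\Theta_0$ and a universal modulus bound -- is entirely standard once Corollary~\ref{tightness with entropy} is at hand.
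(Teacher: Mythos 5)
Your proposal is correct and follows essentially the same route as the paper: a uniform-in-$n$ modulus bound from Corollary~\ref{tightness with entropy} plus Markov's inequality, combined with pointwise tightness on a countable dense subset of $\Theta'$ and an Arzel\`a--Ascoli argument. The only difference is presentational: the paper delegates your final construction of the compact set $A$ and the verification of its relative compactness (including the transfer of total boundedness from the dense parameter set to all of $\Theta$ via equicontinuity and completeness of $\cX$) to the tightness criterion of Theorem~\ref{schwache Konvergenz im Pfadraum} in the appendix, whose proof is exactly the argument you give inline.
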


We recall that (\ref{size of parameter space}) need not be assumed if $\Theta$ is a compact subset of $\RRR^m$ endowed with the Euclidean metric. 
In this case, it is enough only to require $q>m$ in~(\ref{glm Inkremente Ungleichung}) (see Remark~\ref{rem:16122019a1}).

\begin{normalremark}
Notice that \eqref{eq:14122019a1} is satisfied for all processes $X_n$ because they are assumed to be Borel random elements in $\cC(\Theta,\cX)$ in this section and the projection map
$$
\pi_{\theta,\vartheta}\colon\cC(\Theta,\cX)\to\cX^2,\quad f\mapsto(f(\theta),f(\vartheta)),
$$
is continuous for all $(\theta,\vartheta)\in\Theta^2$.
\end{normalremark}

\begin{normalremark}\label{rem:02112020a1}
Observe that, if $\cX$ is separable, then the statements

\smallskip\noindent
(A) $X_n\colon\Omega\to\cC(\Theta,\cX)$ is a \emph{Borel} random element, i.e., a random element in $\big(\cC(\Theta,\cX),\cB(\cC(\Theta,\cX))\big)$; and

\smallskip\noindent
(B) $X_n=(X_n(\cdot,\theta))_{\theta\in\Theta}$ is an $\cX$-valued process
(i.e., for all $\theta\in\Theta$, $X_n(\cdot,\theta)$ is a random element in $(\cX,\cB(\cX))$)
with continuous paths

\smallskip\noindent
are equivalent (see \cite[Lemma~14.1]{Kallenberg1997}).
Thus, whenever $\cX$ is a Polish space, in Proposition~\ref{vereinfachtes Kriterium fuer Straffheit} (and in what follows) we essentially work with sequences of continuous $\cX$-valued processes.
In general, when (A) and (B) no longer coincide, the right choice is always (A), i.e., always to consider Borel random elements in $\cC(\Theta,\cX)$, as the concept of tightness (in $\cC(\Theta,\cX)$) discussed in Proposition~\ref{vereinfachtes Kriterium fuer Straffheit} requires the Borel $\sigma$-algebra (in $\cC(\Theta,\cX)$).
\end{normalremark}

\begin{proofof}{Proposition~\ref{vereinfachtes Kriterium fuer Straffheit}}
We take an arbitrary $\beta\in]0,(q-t)/p[$.
By compactness of $\Theta$ there exists some at most countable dense subset $\overline\Theta$ of $\Theta$.
Corollary~\ref{tightness with entropy} together with the continuity of the processes $X_n$ yields, for all $\delta>0$ and $n\in\NNN$,
$$
\EEE\left[\sup_{\theta,\vartheta\in\Theta\atop d_{\Theta}(\theta,\vartheta)\leq\delta}d_{\cX}\big(X_{n}(\cdot,\theta), X_{n}(\cdot,\vartheta)\big)^p\right] 
= 
\EEE\left[\sup_{\theta,\vartheta\in\overline\Theta\atop d_{\Theta}(\theta,\vartheta)\leq\delta}d_{\cX}\big(X_{n}(\cdot,\theta),X_{n}(\cdot,\vartheta)\big)^p\right]
\le
L(\Theta,C,t,M,p,q,\beta)
\delta^{\beta p}.
$$
Using the Markov inequality, we conclude that, for every $\eps>0$,
$$
\lim_{\delta\to0+}\limsup_{n\to\infty}\,\pr\,\bigg(\sup_{\theta,\vartheta\in\Theta\atop d_{\Theta}(\theta,\vartheta)\leq\delta}d_{\cX}\big(X_{n}(\cdot,\theta), X_{n}(\cdot,\vartheta)\big)\ge\eps\bigg)=0.
$$
Now the criterion for uniform tightness in $\cC(\Theta,\cX)$ presented in Theorem~\ref{schwache Konvergenz im Pfadraum} applies and completes the proof.
\end{proofof}

We observe that essentially the same condition achieves rather different aims in Theorem~\ref{Kolmogorov-Chentsov} and in Proposition~\ref{vereinfachtes Kriterium fuer Straffheit}.
In Theorem~\ref{Kolmogorov-Chentsov}, condition~\eqref{increment inequality} ensures existence of a continuous modification for the process $X$ (when $\cX$ is complete, which is assumed in Section~\ref{asymptotic tightness random processes}),
while in Proposition~\ref{vereinfachtes Kriterium fuer Straffheit}, condition~\eqref{glm Inkremente Ungleichung} implies the uniform tightness in $\cC(\Theta,\cX)$ for the sequence $(X_n)$.
(Notice that \eqref{glm Inkremente Ungleichung} is nothing else but \eqref{increment inequality} required for all $X_n$ uniformly in $n$.)
It is, therefore, tempting to try to shift continuity of the processes into the conclusion of Proposition~\ref{vereinfachtes Kriterium fuer Straffheit}.
And, indeed, this easily follows from the discussions above,
although \emph{at the cost of requiring $\cX$ to be separable.}

\begin{corollary}\label{cor:25042021a1}
Assume that $\cX$ is separable.
Let $\Theta$ fulfill property~(\ref{size of parameter space}) with constants $C, t > 0$.
We consider a sequence $(X_n)_{n\in\NNN}$ of $\cX$-valued processes $X_n=(X_n(\cdot,\theta))_{\theta\in\Theta}$.
Let $\Theta'\subseteq\Theta$ be dense in $\Theta$.
Assume that $(X_n(\cdot,\theta))_{n\in\NNN}$ is a uniformly tight sequence of random elements in $(\cX,\cB(\cX))$,
for all $\theta\in\Theta'$, and that there exist $M,p>0$ and $q>t$ such that
\begin{equation}\label{eq:02112020a1}
\sup_{n\in\NNN}\EEE\left[~d_{\cX}\big(X_n(\cdot,\theta), X_n(\cdot,\vartheta)\big)^{p}~\right]\leq M~d_{\Theta}(\theta,\vartheta)^{q}\quad\mbox{for}~\theta, \vartheta\in\Theta.
\end{equation}
Then each process $X_n$ admits a modification $\overline X_n=(\overline X_n(\cdot,\theta))_{\theta\in\Theta}$ that has continuous paths $\theta\mapsto\overline X_n(\omega,\theta)$ for all $\omega\in\Omega$,
the processes $\overline X_n$, $n\in\NNN$, are Borel random elements in $\cC(\Theta,\cX)$,
and the sequence $(\overline X_n)_{n\in\NNN}$ is uniformly tight in $\cC(\Theta,\cX)$.
\end{corollary}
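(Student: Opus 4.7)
The plan is to apply Theorem~\ref{Kolmogorov-Chentsov} to each $X_n$ individually in order to produce continuous modifications $\overline{X}_n$, to reinterpret these as Borel random elements in $\cC(\Theta,\cX)$ via Remark~\ref{rem:02112020a1}, and finally to apply Proposition~\ref{vereinfachtes Kriterium fuer Straffheit} to the sequence $(\overline{X}_n)_{n\in\NNN}$ to deduce uniform tightness in $\cC(\Theta,\cX)$.

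More concretely, first note that since $\cX$ is separable we have $\cB(\cX^2)=\cB(\cX)\otimes\cB(\cX)$, so the measurability condition~(\ref{eq:14122019a1}) is automatically satisfied by every process $X_n$. As we are in Section~\ref{asymptotic tightness random processes}, the space $\cX$ is also complete, hence Polish. Fix $n\in\NNN$. From~(\ref{eq:02112020a1}) we obtain
$$
\EEE\left[~d_\cX\big(X_n(\cdot,\theta),X_n(\cdot,\vartheta)\big)^p~\right]\le M\,d_\Theta(\theta,\vartheta)^q\quad\text{for all }\theta,\vartheta\in\Theta,
$$
so Theorem~\ref{Kolmogorov-Chentsov} produces a modification $\overline{X}_n$ of $X_n$ whose paths are H\"older-continuous (in particular, continuous) for every $\omega\in\Omega$. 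Invoking Remark~\ref{rem:02112020a1}, which requires the separability of $\cX$, each $\overline{X}_n$ is then a Borel random element in $\cC(\Theta,\cX)$.

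It remains to verify that the sequence $(\overline{X}_n)_{n\in\NNN}$ satisfies the hypotheses of Proposition~\ref{vereinfachtes Kriterium fuer Straffheit}. Since $\overline{X}_n$ is a modification of $X_n$, one has $\overline{X}_n(\cdot,\theta)=X_n(\cdot,\theta)$ almost surely for every fixed $\theta\in\Theta$; hence $(\overline{X}_n(\cdot,\theta))_{n\in\NNN}$ is uniformly tight in $(\cX,\cB(\cX))$ for every $\theta\in\Theta'$, and the moment bound~(\ref{eq:02112020a1}) is transferred verbatim from $(X_n)$ to $(\overline{X}_n)$, because both sides depend only on the joint law of the pair $(X_n(\cdot,\theta),X_n(\cdot,\vartheta))$. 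Proposition~\ref{vereinfachtes Kriterium fuer Straffheit} then yields the uniform tightness of $(\overline{X}_n)_{n\in\NNN}$ in $\cC(\Theta,\cX)$.

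No step is particularly subtle; the argument is essentially bookkeeping once Theorem~\ref{Kolmogorov-Chentsov}, Remark~\ref{rem:02112020a1}, and Proposition~\ref{vereinfachtes Kriterium fuer Straffheit} are at hand. The delicate point, and the reason for the extra separability hypothesis on $\cX$, is the step invoking Remark~\ref{rem:02112020a1}: without separability one cannot in general identify a process with continuous paths with a Borel random element in $\cC(\Theta,\cX)$, which is precisely the identification needed to speak of tightness in $\cC(\Theta,\cX)$.
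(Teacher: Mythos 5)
Your proof is correct and follows essentially the same route as the paper: Theorem~\ref{Kolmogorov-Chentsov} for the continuous modifications, Remark~\ref{rem:02112020a1} (using separability of $\cX$) to view them as Borel random elements in $\cC(\Theta,\cX)$, and Proposition~\ref{vereinfachtes Kriterium fuer Straffheit} for the uniform tightness. The only difference is that you spell out the (routine but worthwhile) verification that the hypotheses transfer from $(X_n)$ to the modifications $(\overline X_n)$, which the paper leaves implicit.
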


\noindent
\textbf{Proof}
Theorem~\ref{Kolmogorov-Chentsov} ensures the existence of the continuous modifications $\overline X_n$, $n\in\NNN$.
As $\cX$ is separable, then, due to the equivalence between (A) and~(B) in Remark~\ref{rem:02112020a1}, each $\overline X_n$ is a Borel random element in $\cC(\Theta,\cX)$.
The uniform tightness of the sequence $(\overline X_n)_{n\in\NNN}$ now follows from Proposition~\ref{vereinfachtes Kriterium fuer Straffheit}.\hfill$\Box$

\begin{normalremark}
If in Corollary~\ref{cor:25042021a1} we additionally require that each process $X_n$ is \emph{separable} (the definition is recalled below),
then we obtain that each process $X_n$ is itself continuous almost surely,
so that we obtain the uniform tightness for the sequence $(X_n)_{n\in\NNN}$ \emph{itself}.\footnote{Formally, we need to identify each process $X_n$ with almost all continuous paths with an indistinguishable process with all continuous paths, in order to view $X_n$ as a Borel random element in $\cC(\Theta,\cX)$.}
This immediately follows from Lemma~\ref{lem:02112020a1} below.
For when this remark can be useful, we observe that, in some situations, we are given processes that are \emph{a priori} separable (e.g., c\`adl\`ag $\cX$-valued processes in the case $\Theta=[0,1]$).
\end{normalremark}

It remains to justify the previous remark.
Recall that an $\cX$-valued process $(Y(\cdot,\theta))_{\theta\in\Theta}$ on some $(\Omega,\cF,\pr)$ is called \emph{separable}\footnote{In the sense of Doob.}
if there exist an at most countable subset $\Theta_0\subseteq\Theta$ dense in $\Theta$ and an event $\Omega_0\in\cF$ with $\pr(\Omega_0)=1$ such that for every open subset $\mathcal{G}$ of $\Theta$, and any closed subset $\mathcal{D}$ of $\mathcal{X}$ the following equality holds true
$$
\big\{\omega\in\Omega_{0}\mid Y(\omega,\theta)\in\mathcal{D}~\mbox{for all}~\theta\in\mathcal{G}\cap\Theta_{0}\big\} 
= 
\big\{\omega\in\Omega_{0}\mid Y(\omega,\theta)\in\mathcal{D}~\mbox{for all}~\theta\in\mathcal{G}\big\} 
$$
(see \cite{GikhmanSkorokhod1974}).

\begin{lemma}\label{lem:02112020a1}
Let $Y=\big(Y(\cdot,\theta)\big)_{\theta\in\Theta}$ be a separable $\cX$-valued process that admits a continuous modification. Then $Y=\big(Y(\cdot,\theta)\big)_{\theta\in\Theta}$ is itself continuous \emph{almost surely}, and hence there is an indistinguishable from $Y$ process $\widetilde Y$ such that \emph{all} its paths are continuous.
\end{lemma}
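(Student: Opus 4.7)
The plan is to show that $Y$ and the continuous modification $Z$ are in fact indistinguishable; since $Z$ has \emph{all} paths continuous, this at once yields both conclusions of the lemma (almost sure continuity of $Y$, and the existence of $\widetilde Y:=Z$ with every path continuous and indistinguishable from $Y$). Throughout, let $\Theta_0\subseteq\Theta$ (at most countable and dense) and $\Omega_0\in\cF$ with $\pr(\Omega_0)=1$ denote the objects witnessing the separability of $Y$, and let $Z=(Z(\cdot,\theta))_{\theta\in\Theta}$ be a given continuous modification of $Y$.

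First I would pass to a joint full-measure event on which $Y$ and $Z$ already coincide along $\Theta_0$. Because $Z$ is a modification of $Y$, one has $\pr(Y(\cdot,\theta)=Z(\cdot,\theta))=1$ for each individual $\theta\in\Theta$, and intersecting these countably many events over $\Theta_0$ produces an event $\Omega_1$ with $\pr(\Omega_1)=1$ such that $Y(\omega,\theta)=Z(\omega,\theta)$ for every $\omega\in\Omega_1$ and every $\theta\in\Theta_0$.

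The heart of the argument will be a pointwise reasoning on $\Omega_0\cap\Omega_1$ that exploits the separability property with a carefully chosen pair $(\mathcal{G},\mathcal{D})$. Fix $\omega\in\Omega_0\cap\Omega_1$ and $\theta\in\Theta$, and set $z:=Z(\omega,\theta)$. For arbitrary $\eps>0$, continuity of the path $Z(\omega,\cdot)$ at $\theta$ supplies an open neighborhood $\mathcal{G}\subseteq\Theta$ of $\theta$ with $Z(\omega,\vartheta)\in\mathcal{D}:=\{x\in\cX:d_{\cX}(x,z)\le\eps\}$ for every $\vartheta\in\mathcal{G}$. For $\vartheta\in\mathcal{G}\cap\Theta_0$, the defining property of $\Omega_1$ then gives $Y(\omega,\vartheta)=Z(\omega,\vartheta)\in\mathcal{D}$, and invoking the separability equality for this specific pair $(\mathcal{G},\mathcal{D})$ upgrades this to $Y(\omega,\vartheta)\in\mathcal{D}$ for \emph{all} $\vartheta\in\mathcal{G}$. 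Specializing to $\vartheta=\theta$ and letting $\eps\downarrow 0$ forces $Y(\omega,\theta)=z=Z(\omega,\theta)$. Since this identity holds at every $\theta$ for every $\omega$ in the full-measure event $\Omega_0\cap\Omega_1$, the processes $Y$ and $Z$ are indistinguishable, and taking $\widetilde Y:=Z$ delivers the second claim.

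The only delicate point -- hardly a genuine obstacle, but worth flagging -- is that the neighborhood $\mathcal{G}$ and the closed set $\mathcal{D}$ in the above construction depend on the frozen $\omega$. This causes no difficulty, because the separability definition asserts the set identity for \emph{every} pair consisting of an open $\mathcal{G}\subseteq\Theta$ and a closed $\mathcal{D}\subseteq\cX$; consequently the implication "$Y(\omega,\cdot)\in\mathcal{D}$ on $\mathcal{G}\cap\Theta_0\Rightarrow Y(\omega,\cdot)\in\mathcal{D}$ on $\mathcal{G}$" may be applied pointwise in $\omega$ with $\omega$-dependent choices of $\mathcal{G}$ and $\mathcal{D}$.
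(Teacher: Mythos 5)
Your proof is correct, and it reaches the conclusion by a slightly different route than the paper. Both arguments start the same way: intersect the countably many events $\{Y(\cdot,\vartheta)=Z(\cdot,\vartheta)\}$, $\vartheta\in\Theta_0$, with the separability event $\Omega_0$ to obtain a full-measure event on which $Y$ agrees with the continuous modification along $\Theta_0$, and then apply the separability identity pointwise in $\omega$ with an $\omega$-dependent pair $(\mathcal{G},\mathcal{D})$ --- a step you rightly flag as legitimate, since the identity holds for every such pair. The difference lies in the choice of $\mathcal{D}$. The paper takes $\mathcal{D}_k$ to be the closure of $\{Y(\omega,\vartheta)\mid\vartheta\in\mathcal{G}_k\cap\Theta_0\}$ for shrinking balls $\mathcal{G}_k$ around $\theta$, deduces $Y(\omega,\theta)\in\mathcal{D}_k$, extracts a sequence in $\mathcal{G}_k\cap\Theta_0$ whose $Y$-values converge to $Y(\omega,\theta)$, and then uses compactness of $\Theta$ to pass to a convergent subsequence of parameters and continuity of the modification (applied twice) to identify the limit. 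You instead take $\mathcal{D}$ to be the closed $\eps$-ball around $Z(\omega,\theta)$ and use continuity of $Z(\omega,\cdot)$ to supply $\mathcal{G}$, which yields $d_{\cX}\big(Y(\omega,\theta),Z(\omega,\theta)\big)\leq\eps$ immediately. Your version is shorter, avoids the subsequence extraction, and does not use compactness of $\Theta$ at all (only density of $\Theta_0$, which is part of the separability definition and also guarantees $\mathcal{G}\cap\Theta_0\neq\emptyset$, so the separability implication is applied non-vacuously); like the paper's proof, it requires no completeness of $\cX$. Both deliver the same slightly stronger conclusion, namely that $Y$ and $Z$ are indistinguishable.
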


It is worth noting that, contrary to the general setting in Section~\ref{asymptotic tightness random processes}, for this lemma the metric space $\cX$ does not need to be complete.

\bigskip
\begin{proof}
Let $\widetilde Y=\big(\widetilde Y(\cdot,\theta)\big)_{\theta\in\Theta}$ be a continuous modification of $Y$, i.e., for all $\theta\in\Theta$ we have $\pr(\{\widetilde Y(\cdot,\theta)=Y(\cdot,\theta)\})=1$ and the paths $\theta\mapsto\widetilde Y(\omega,\theta)$ are continuous for all $\omega\in\Omega$. As $Y$ is separable, we can find an at most countable $\Theta_0\subseteq\Theta$ dense in $\Theta$ and $\Omega_0\in\cF$ with $\pr(\Omega_0)=1$ as described prior to Lemma~\ref{lem:02112020a1}.
Define
$$
\Omega_1=\bigcap_{\theta\in\Theta_0}\{\widetilde Y(\cdot,\theta)=Y(\cdot,\theta)\}\cap\Omega_0
$$
and observe that $\pr(\Omega_1)=1$. It suffices to show that $Y(\omega,\theta) = \widetilde{Y}(\omega,\theta)$ holds for $\omega\in\Omega_{1}$ and $\theta\in\Theta$. So let us fix 
$\omega\in\Omega_{1}$ and $\theta\in\Theta$.

For $k\in\NNN$ set $\mathcal{G}_{k} := \{\vartheta\in\Theta\mid d_{\Theta}(\theta,\vartheta) < 1/k\}$, and let $\mathcal{D}_{k}$ denote the closure of the set $\{Y(\omega,\vartheta)\mid\vartheta\in\mathcal{G}_{k}\cap\Theta_{0}\}$. Now, separability of $Y$ yields $Y(\omega,\theta)\in\mathcal{D}_{k}$. In particular, there is some sequence $(\vartheta^{k}_{n})_{n\in\NNN}$ in $\mathcal{G}_{k}\cap\Theta_{0}$ such that $Y(\omega,\vartheta^{k}_{n})\to Y(\omega,\theta)$, as $n\to\infty$. This implies $\widetilde{Y}(\omega,\vartheta^{k}_{n})\to Y(\omega,\theta)$, as $n\to\infty$, due to definition of $\Omega_{1}$. Moreover, we may select by compactness of $\Theta$ a subsequence $(\vartheta_{i(n)}^{k})_{n\in\NNN}$ of $(\vartheta^{k}_{n})_{n\in\NNN}$ which converges to some $\overline{\vartheta}^{k}\in\Theta$. Then, by continuity of  
$\widetilde Y$,
$$
Y(\omega,\theta)
=\lim_{n\to\infty}\widetilde Y(\omega,\theta^{k}_{i(k)})
=\widetilde Y(\omega,\overline{\vartheta}^{k}).
$$
As $d_{\Theta}(\theta,\overline{\vartheta}^{k})\leq 1/k$,
the sequence $(\overline{\vartheta}^{k})_{k\in\NNN}$ converges to $\theta$. Hence, drawing on the continuity of $\widetilde{Y}$ again, we end up with 
$$
\widetilde{Y}(\omega,\theta) = \lim_{k\to\infty}\widetilde Y(\omega,\overline{\vartheta}^{k}) = Y(\omega,\theta).
$$
This completes the proof.
\end{proof}

\section{Central limit theorems for Banach-valued random processes}
\label{CLT random processes}
Let $(\Theta,d_{\Theta})$ be a compact metric space, and let $(\cX,\|\cdot\|_{\cX})$ be a Banach space. We shall denote by $\cC(\Theta,\cX)$ the space of all continuous mappings from $\Theta$ into $\cX$. It will be endowed with sup-norm $\|\cdot\|_{\infty}$ w.r.t.\ $\|\cdot\|_{\cX}$, and the induced Borel $\sigma$-algebra $\cB\big(\cC(\Theta,\cX)\big)$. 
\medskip

Consider any i.i.d.\ sequence $(X_i)_{i\in\NNN}$  of Bochner-integrable Borel random elements in $\cC(\Theta,\cX)$ on some probability space $\OFP$. We want to investigate weak convergence of the sequence $(S_{n})_{n\in\NNN}$ consisting of Borel random elements in $\cC(\Theta,\cX)$ defined by
$$
S_{n} := \frac{1}{\sqrt{n}}\sum_{i=1}^{n}\big(X_{i} - \EEE^{B}[X_{i}]\big)\quad\mbox{for}~n\in\NNN,
$$
where $\EEE^{B}[X_{i}]$ denotes the Bochner-integral of $X_{i}$. 
We start with the following observation.

\begin{proposition}
\label{einfach tightness}
Let  $\|X_{1}\|_{\infty}$ be square integrable.

\medskip
(i) The following statements are equivalent:

\smallskip\noindent
a) The sequence $(S_{n})_{n\in\NNN}$ is uniformly tight;

\smallskip\noindent
b) The sequence $(S_{n})_{n\in\NNN}$ converges weakly to some centered Gaussian random element in $\big(\cC(\Theta,\cX),\cB(\cC(\Theta,\cX))\big)$.

\medskip
(ii) If the equivalent statements in part~(i) are satisfied, then the limiting law in~b) is tight.
\end{proposition}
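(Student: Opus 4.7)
The overall strategy is to reduce to a separable Polish Banach subspace of $\cC(\Theta,\cX)$ and then to invoke only classical Banach-valued CLT ideas there. By the Pettis measurability theorem, the common law of the Bochner-integrable $X_i$'s is concentrated on a separable closed subspace; hence there exists a closed separable subspace $B_0\subseteq\cC(\Theta,\cX)$ such that every $X_i$, every centered summand $X_i-\EEE^B[X_i]$ and thus every $S_n$ takes values in $B_0$ almost surely. Moreover $\|X_i-\EEE^B[X_i]\|_\infty\le\|X_1\|_\infty+\EEE[\|X_1\|_\infty]\in L^2$, so we are in the classical setting of i.i.d., centered, square-integrable sums in the Polish Banach space $B_0$.

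For (a)$\Rightarrow$(b), the assumed uniform tightness restricts to $B_0$; by Prokhorov, $(S_n)$ is relatively weakly compact there. Any subsequential weak limit $\mu$ has finite-dimensional distributions obtained via the continuous projections $B_0\ni f\mapsto(f(\theta_1),\dots,f(\theta_k))\in\cX^k$, and composing with an arbitrary continuous linear functional $\phi\in(\cX^k)^*$ reduces matters to a normalized i.i.d.\ sum of centered real $L^2$-variables, which converges by the classical real CLT to a centered Gaussian. Hence every finite-dimensional projection of $\mu$ is centered Gaussian on $\cX^k$, and since finite-dimensional distributions over a countable dense subset of $\Theta$ determine any Borel probability law on $\cC(\Theta,\cX)$, all subsequential limits coincide with one and the same centered Gaussian $\mu$, giving $(S_n)\Rightarrow\mu$.

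For (b)$\Rightarrow$(a), the Portmanteau theorem applied to the closed set $B_0$ forces the weak limit $\mu$ to be supported on $B_0$, hence $\mu$ is a Borel probability measure on the Polish space $B_0$ and is tight by Ulam's theorem. On a Polish space, weak convergence to a tight limit implies uniform tightness (use an $\eps/2$-tight compact for $\mu$ and Portmanteau to handle large $n$, then Ulam for the remaining finitely many $n$), yielding~(a). Part~(ii) is then immediate: for any $\eps>0$, a compact $K_\eps\subseteq\cC(\Theta,\cX)$ with $\pr(S_n\in K_\eps)\ge 1-\eps$ for all $n$ gives $\mu(K_\eps)\ge\limsup_n\pr(S_n\in K_\eps)\ge 1-\eps$ by Portmanteau.

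The main obstacle I anticipate is the clean reduction to $B_0$: identifying a single closed separable subspace carrying all partial sums almost surely, and then verifying that tightness, weak convergence and the finite-dimensional identification arguments transport faithfully between the possibly non-separable ambient space $\cC(\Theta,\cX)$ and the Polish subspace $B_0$. Once that is done, the remaining steps rely only on standard real-valued CLT and Polish-space weak-convergence techniques.
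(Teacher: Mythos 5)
Your proposal is correct and follows essentially the same route as the paper: reduce to a closed separable (Polish) subspace carrying all the $S_n$ almost surely, apply Prokhorov in both directions, establish tightness of the limit via concentration on that subspace, and identify the limit through the classical finite-dimensional CLT applied to continuous linear functionals, using that on a separable subspace the $\sigma$-algebra generated by such functionals (equivalently, by evaluations) coincides with the Borel $\sigma$-algebra. One caveat: your parenthetical sketch of the converse Prokhorov/Le Cam step in (b)$\Rightarrow$(a) is not quite right --- portmanteau applied to a single compact $K$ with $\mu(K)\ge 1-\eps/2$ bounds $\limsup_n\pr(S_n\in K)$ from \emph{above}, not below (the standard proof instead takes, for each $k$, a finite union of open $1/k$-balls of $\mu$-measure close to one and intersects the closures) --- but the theorem you invoke there is true and standard, so the step stands.
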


We remark that, as every Borel probability measure in a Polish space is tight, statement~(ii) in Proposition~\ref{einfach tightness} has a message only when $\cC(\Theta,\cX)$ (equivalently, $\cX$) is non-separable.

\bigskip
\begin{proof}
As the Borel random element $X_1$ is Bochner-integrable, it is almost surely separably-valued. Then we can find a closed separable linear subspace $\widehat C$ of $\cC(\Theta,\cX)$ such that $\pr(\{X_1\in\widehat C\})=1$
(note that $\widehat C$ is itself a Polish space and $\widehat C\in\cB(\cC(\Theta,\cX))$).
It follows that $\EEE^{B}[X_{1}]\in\widehat C$. This yields $\pr(\{X_1-\EEE^{B}[X_{1}]\in\widehat C\})=1$, hence $\pr(\{S_n\in\widehat C\})=1$ for all $n\in\NNN$. In view of the portmanteau lemma this yields 
that every weak limit point  of the laws of $S_{n}$, $n\in\NNN$, is concentrated on $\widehat C$ (in particular, is tight), thus establishing part~(ii). 
Moreover, the implication $b)\Rightarrow a)$ in part~(i) now follows from Prokhorov's theorem, which applies due to the fact that all measures are concentrated on a Polish space.

\medskip
We turn to the implication $a)\Rightarrow b)$ in part~(i).
By Prokhorov's theorem, the uniformly tight sequence $(S_{n})_{n\in\NNN}$ is relatively weakly sequentially compact.
It remains to prove uniqueness of a limit point and its Gaussianity.
To this end, let $r\in\NNN$ and $\Lambda_j\colon\cC(\Theta,\cX)\to\RRR$, $j=1,\ldots,r$, be continuous linear functionals.
Classical multivariate central limit theorem applies to the sequence
$\big((\Lambda_{1}\circ S_{n},\ldots\Lambda_{r}\circ S_{n})\big)_{n\in\NNN}$
because
$$
\EEE\left[|\Lambda_j\circ X_1|^2\right]\le\|\Lambda_j\|^2\:\EEE\left[\|X_1\|_\infty^2\right]<\infty,\quad j=1,\ldots,r
$$
($\|\Lambda_j\|$ denotes the operator norm of $\Lambda_j$) and yields weak convergence to a centered Gaussian law in $\RRR^r$.
This identifies every weak limit point of the laws of $S_n$, $n\in\NNN$, as a Gaussian measure and uniquely determines every weak limit point on the $\sigma$-algebra $\cE$ generated by continuous linear functionals $\cC(\Theta,\cX)\to\RRR$.
Notice that $\cE\subseteq\cB(\cC(\Theta,\cX))$, and the inclusion can be strict (when $\cC(\Theta,\cX)$ is non-separable).
However, restricted to $\widehat C$ both $\sigma$-algebras coincide:
$$
\widehat C\cap\cE=\widehat C\cap\cB(\cC(\Theta,\cX))
$$
(see \cite[Theorem~I.2.1]{VakhaniaEtAl.1987}).
Recalling that every weak limit point is concentrated on $\widehat C$ completes the proof.
\end{proof}

For application of Proposition~\ref{einfach tightness} we can utilize our criterion in Proposition~\ref{vereinfachtes Kriterium fuer Straffheit}
and obtain the following result.

\begin{corollary}
\label{Hilfe von Kolmogorov-Chentsov}
Let $\Theta$ satisfy condition~(\ref{size of parameter space}) with constants $C,t > 0$, and let $\|X_{1}\|_{\infty}$ be square integrable.
Assume that there is a dense subset $\Theta'\subseteq\Theta$ such that 
\begin{equation}
\label{fdCLT-convergence}
\big(S_{n}(\cdot,\theta)\big)_{n\in\NNN}\text{ is a uniformly tight sequence of random elements in }\cX,\text{ for all }\theta\in\Theta',
\end{equation}
and that there exist $M, p > 0$ as well as $q > t$ with
\begin{equation}
\label{glm Inkremente Ungleichung CLT}
\sup_{n\in\NNN}\EEE\left[\|S_{n}(\cdot,\theta) - S_{n}(\cdot,\vartheta)\|_{\cX}^{p}\right]\leq M~d_{\Theta}(\theta,\vartheta)^{q}\quad\mbox{for}~\theta, \vartheta\in\Theta.
\end{equation}
Then the sequence $(S_{n})_{n\in\NNN}$ converges weakly to a tight centered Gaussian random element in $\big(\cC(\Theta,\cX),\cB\big(\cC(\Theta,\cX)\big)\big)$.
\end{corollary}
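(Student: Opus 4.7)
The plan is to assemble this corollary directly from the two previous propositions in the section, as the hypotheses have been arranged precisely for that purpose. First, I would note that $(\cX,\|\cdot\|_\cX)$ is automatically a complete metric space under $d_\cX(x,y):=\|x-y\|_\cX$, and $\Theta$ is compact, so the general setting of Section~\ref{asymptotic tightness random processes} applies to the sequence $(S_n)_{n\in\NNN}$. Each $S_n$ is Borel-measurable as a mapping into $\cC(\Theta,\cX)$, because the Bochner integral $\EEE^B[X_1]\in\cC(\Theta,\cX)$ is a constant and the operations of translation, scaling by $1/\sqrt n$, and finite summation are continuous on $\cC(\Theta,\cX)$.

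Next, I would invoke Proposition~\ref{vereinfachtes Kriterium fuer Straffheit} with $X_n$ replaced by $S_n$ and $d_\cX$ as above. Assumption~\eqref{fdCLT-convergence} is exactly the pointwise tightness required along the dense set $\Theta'$, while \eqref{glm Inkremente Ungleichung CLT} is exactly the uniform increment bound~\eqref{glm Inkremente Ungleichung} of that proposition. The geometric condition~\eqref{size of parameter space} is assumed with the correct constants. Hence Proposition~\ref{vereinfachtes Kriterium fuer Straffheit} yields that $(S_n)_{n\in\NNN}$ is a uniformly tight sequence of Borel random elements in $\cC(\Theta,\cX)$.

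Finally, since $\|X_1\|_\infty$ is square integrable, Proposition~\ref{einfach tightness}(i) applies to $(S_n)_{n\in\NNN}$, and the implication $a)\Rightarrow b)$ upgrades the uniform tightness just established to weak convergence toward a centered Gaussian random element in $\bigl(\cC(\Theta,\cX),\cB(\cC(\Theta,\cX))\bigr)$. Part~(ii) of the same proposition guarantees that the limiting law is tight, which completes the argument. There is no substantive obstacle in this proof: the entire work is concealed in Proposition~\ref{vereinfachtes Kriterium fuer Straffheit} (and thus ultimately in Theorem~\ref{Kolmogorov-Chentsov}); the only point that deserves a brief check is that the hypotheses of the two propositions line up cleanly for the centered partial-sum processes $S_n$, as verified above.
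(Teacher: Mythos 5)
Your proof is correct and is precisely the argument the paper intends: the corollary is stated as an immediate combination of Proposition~\ref{vereinfachtes Kriterium fuer Straffheit} (giving uniform tightness of $(S_n)_{n\in\NNN}$ in $\cC(\Theta,\cX)$ from \eqref{fdCLT-convergence} and \eqref{glm Inkremente Ungleichung CLT}) with Proposition~\ref{einfach tightness} (upgrading tightness to weak convergence toward a tight centered Gaussian limit). The hypotheses line up exactly as you checked, so there is nothing to add.
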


We want to discuss the requirements of Corollary~\ref{Hilfe von Kolmogorov-Chentsov} for special choices of the Banach space $\cX$.
Let us start with type~2 - Banach spaces. To recall, the Banach space $\cX$ is called a \emph{type~2 - Banach space} if there is a constant $C > 0$ such that,
for all $n\in\NNN$ and $\cX$-valued independent centered Borel random elements $W_{1},\dots,W_{n}$ such that $\|W_{i}\|_{\cX}$ are square integrable, we have the following inequality
$$
\EEE\left[\left\|\sum_{i=1}^{n}W_{i}\right\|_{\cX}^{2}\right] \leq C~\sum_{i=1}^{n}\EEE\left[\left\|W_{i}\right\|_{\cX}^{2}\right]
$$
(see, e.g., \cite[Theorem 2.1]{Hoffmann-JorgensenPisier1976}).
Prominent examples of type~2 - Banach space are the following:
\begin{itemize}
\item $\cX$ is a finite-dimensional vector space,
\item $\cX$ is an $L^{p}$-space on some $\sigma$-finite measure space $(\mathfrak{X},\cA,\nu)$ with $L^{p}$-norm $\|\cdot\|_{p}$ for $p\in [2,\infty[$ (see \cite[Section~9.2]{LedouxTalagrand1991}).
\end{itemize}
If $\cX$ is a type~2 - Banach space, then conditions (\ref{fdCLT-convergence}) and~(\ref{glm Inkremente Ungleichung CLT}) can be simplified in the following way.

\begin{proposition}
\label{CLTType2}
Let $\cX$ be a type~2 - Banach space, let $\Theta$ satisfy condition~(\ref{size of parameter space}) with constants $C,t > 0$, and let $\|X_{1}\|_{\infty}$ be square integrable. Then it holds: 
\begin{itemize}
\item [1)]
The sequence $(S_{n})_{n\in\NNN}$ always satisfies condition~(\ref{fdCLT-convergence}), even with $\Theta'=\Theta$.
\item [2)]
The sequence $(S_{n})_{n\in\NNN}$ satisfies condition~(\ref{glm Inkremente Ungleichung CLT}) with $p=2$ whenever there exist $M > 0$ and $q> t$ such that
\begin{equation}\label{eq:12042021a1}
\EEE\left[\|X_{1}(\cdot,\theta) - X_{1}(\cdot,\vartheta)\|_{\cX}^{2}\right]\leq M~d_{\Theta}(\theta,\vartheta)^{q}\quad\mbox{for}~\theta, \vartheta\in\Theta.
\end{equation}
\end{itemize}
In particular, under~\eqref{eq:12042021a1}, the sequence $(S_{n})_{n\in\NNN}$ converges weakly to a tight centered Gaussian random element in $\big(\cC(\Theta,\cX),\cB\big(\cC(\Theta,\cX)\big)\big)$.
\end{proposition}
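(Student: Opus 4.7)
The plan is to establish parts 1) and 2) independently by exploiting the defining type~2 moment inequality, and then to invoke Corollary~\ref{Hilfe von Kolmogorov-Chentsov} for the ``In particular'' claim.

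For 1), fix $\theta\in\Theta$ and set $W_i^\theta := X_i(\cdot,\theta) - \EEE^B[X_i(\cdot,\theta)]$, so that $S_n(\cdot,\theta) = \frac{1}{\sqrt{n}}\sum_{i=1}^n W_i^\theta$. The summands are i.i.d., centered, and square-integrable because $\|W_1^\theta\|_\cX\le 2\|X_1\|_\infty$ with $\|X_1\|_\infty$ square-integrable. As argued in the proof of Proposition~\ref{einfach tightness}, Bochner-integrability of $X_1$ in $\cC(\Theta,\cX)$ forces $X_1(\cdot,\theta)$, and hence $W_1^\theta$, to take values almost surely in a separable closed linear subspace of $\cX$, which itself inherits the type~2 property. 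At this point I would invoke the classical central limit theorem in type~2 Banach spaces (\cite{Hoffmann-JorgensenPisier1976}; see also \cite{LedouxTalagrand1991}), yielding weak convergence of $S_n(\cdot,\theta)$ to a centered Gaussian random element in $\cX$ and, in particular, uniform tightness. So~(\ref{fdCLT-convergence}) holds even with $\Theta' = \Theta$.

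For 2), fix $\theta,\vartheta\in\Theta$ and set $W_i^{\theta,\vartheta} := Y_i - \EEE^B[Y_i]$ with $Y_i := X_i(\cdot,\theta) - X_i(\cdot,\vartheta)$; these are i.i.d., centered, square-integrable, and satisfy $S_n(\cdot,\theta) - S_n(\cdot,\vartheta) = \frac{1}{\sqrt{n}}\sum_{i=1}^n W_i^{\theta,\vartheta}$. Applying the type~2 inequality to this sum and dividing by $n$ yields
$$
\EEE\bigl[\|S_n(\cdot,\theta) - S_n(\cdot,\vartheta)\|_\cX^2\bigr] \le C\,\EEE\bigl[\|W_1^{\theta,\vartheta}\|_\cX^2\bigr].
$$
A routine centering estimate combined with Jensen's inequality for the Bochner integral gives $\EEE[\|W_1^{\theta,\vartheta}\|_\cX^2] \le 4\,\EEE[\|Y_1\|_\cX^2]$, and assumption~(\ref{eq:12042021a1}) then bounds this by $4M\,d_\Theta(\theta,\vartheta)^q$. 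Altogether one obtains~(\ref{glm Inkremente Ungleichung CLT}) with $p=2$ and the constant $4CM$ in place of $M$.

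For the ``In particular'' statement, parts 1) and 2) supply, under~(\ref{eq:12042021a1}), exactly the hypotheses~(\ref{fdCLT-convergence}) and~(\ref{glm Inkremente Ungleichung CLT}) of Corollary~\ref{Hilfe von Kolmogorov-Chentsov} with $p=2$, and the conclusion is immediate. The only genuinely non-elementary ingredient is the type~2 CLT invoked in 1); step~2) is a short algebraic manipulation with the type~2 moment inequality, and the final step is a direct application of an earlier result, so I do not expect any substantial obstacle beyond the careful citation of the type~2 CLT.
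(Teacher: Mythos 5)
Your proposal is correct and follows essentially the same route as the paper: part 1) via the type~2 CLT applied to the (almost surely separably-valued, hence Radon) marginals $X_1(\cdot,\theta)-\EEE^B[X_1(\cdot,\theta)]$, part 2) via the type~2 moment inequality combined with the centering bound $\EEE[\|Y-\EEE^B[Y]\|_\cX^2]\le 4\,\EEE[\|Y\|_\cX^2]$ from Jensen's inequality, and the final claim via Corollary~\ref{Hilfe von Kolmogorov-Chentsov}. The only cosmetic difference is that you pass to a separable closed subspace inheriting type~2 where the paper invokes the Radon version of Prokhorov's theorem in $\cX$ itself; both are valid and equivalent in substance.
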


It is worth noting that, even in the separable case, we cannot get this result from the general central limit theorem in type~2 - Banach spaces (see, e.g., \cite[Theorem~10.5]{LedouxTalagrand1991}) because, in Proposition~\ref{CLTType2}, it is only the space $\cX$ and not $\cC(\Theta,\cX)$ that has type~2.

\bigskip
\begin{proof}
Consider for $\theta\in\Theta$ the continuous linear operator $\pi_{\theta}\colon\mathcal{C}\big(\Theta,\cX\big)\rightarrow\cX$ defined by $\pi_{\theta}(f) := f(\theta)$. Then by Bochner-integrability of the Borel random element $X_1$ in $\cC(\Theta,\cX)$, we may conclude that the Borel random element $X_{1}(\cdot,\theta) = \pi_{\theta}\circ X_{1}$ of $\cX$ 
is Bochner-integrable with Bochner-integral $\EEE^{B}\big[X_{1}(\cdot,\theta)\big] = \pi_{\theta}\big(\EEE^{B}\big[X_{1}\big]\big)$. In particular, it 
is almost surely separably-valued. Hence the Borel random element $X_{1}(\cdot,\theta) - \EEE^{B}\big[X_{1}(\cdot,\theta)\big]$ is almost surely separably-valued too. This means that $X_{1}(\cdot,\theta) - \EEE^{B}\big[X_{1}(\cdot,\theta)\big]$ is concentrated on some separable closed subset of $\cX$. Due to completeness of $\|\cdot\|_{\infty}$ this implies that $X_{1}(\cdot,\theta) - \EEE^{B}\big[X_{1}(\cdot,\theta)\big]$ is a Radon Borel random element of $\cX$  (see \cite[p. 29, Corollary]{VakhaniaEtAl.1987}). Now,
statement~1) follows from the general central limit theorem in type~2 - Banach spaces (see \cite[Theorem 3.6]{Hoffmann-JorgensenPisier1976} or \cite[Theorem~10.5]{LedouxTalagrand1991}) along with the version of Prokhorov's theorem for Radon measures 
(see, e.g., \cite[Theorem~I.3.6]{VakhaniaEtAl.1987}).


Concerning statement~2), by the above definition of type~2 - Banach spaces, we can find some constant $C > 0$ such that 
$$
\sup_{n\in\NNN}\EEE\left[\|S_{n}(\cdot,\theta) - S_{n}(\cdot,\vartheta)\|_{\cX}^{2}\right]\leq {C}~ \EEE\left[\big\|X_{1}(\cdot,\theta)  - X_{1}(\cdot,\vartheta) - \EEE^{B}\big[X_{1}(\cdot,\theta) - X_{1}(\cdot,\vartheta)\big]\big\|_{\cX}^{2}\right].
$$
We now observe that
\begin{align*}
&
\EEE\left[\big\|X_{1}(\cdot,\theta) - X_{1}(\cdot,\vartheta) - \EEE^{B}\big[X_{1}(\cdot,\theta) - X_{1}(\cdot,\vartheta)\big]\big\|_{\cX}^{2}\right]\\ 
&\leq 
2\,\EEE\left[\big\|X_{1}(\cdot,\theta) -  X_{1}(\cdot,\vartheta)\big\|_{\cX}^{2}\right] 
+ 2 \left\|\EEE^{B}\big[X_{1}(\cdot,\theta)- X_{1}(\cdot,\vartheta)\big]\right\|_{\cX}^{2}\\
&\leq 
4\,\EEE\left[\big\|X_{1}(\cdot,\theta) -  X_{1}(\cdot,\vartheta)\big\|_{\cX}^{2}\right],
\end{align*}
where in the last step we use Jensen's inequality.
This completes the proof.
\end{proof}


Let us turn to cotype~2 - Banach spaces.
The Banach space $\cX$ is called a \emph{cotype~2 - Banach space} if there is a constant $C > 0$ such that, for all $n\in\NNN$ and $\cX$-valued independent centered Borel random elements
$W_{1},\dots,W_{n}$
such that $\|W_{i}\|_{\cX}$ are square integrable, we have the following inequality
$$
\EEE\left[\left\|\sum_{i=1}^{n}W_{i}\right\|_{\cX}^{2}\right] \geq C~\sum_{i=1}^{n}\EEE\left[\left\|W_{i}\right\|_{\cX}^{2}\right]
$$
(see, e.g., \cite{ChobanjanTarieladze1977}).
For a further preparation, let us also recall that a centered tight Borel random element $W$ in $\cX$ is called \emph{pre-gaussian} if there is some centered tight Gaussian random element $G$ in $\cX$ such that
$$
\EEE\left[L_{1}(W)~L_{2}(W)\right] = 
\EEE\left[L_{1}(G)~L_{2}(G)\right]
$$
holds for every pair $L_{1},L_{2}$ of continuous linear forms on $\cX$.

If $\cX$ is cotype~2 - Banach space we can obtain the following criterion for property~(\ref{fdCLT-convergence}).

\begin{proposition}
\label{CLTCoType2}
Let $\cX$ be a cotype~2 - Banach space, let $\Theta$ satisfy condition (\ref{size of parameter space}) with constants $C,t > 0$, and let $\|X_{1}\|_{\infty}$ be square integrable.
Assume that there is a dense subset $\Theta'\subseteq\Theta$ such that
$$
X_{1}(\cdot,\theta) - \EEE^{B}[X_{1}(\cdot,\theta)]
\quad\text{is pre-gaussian for all }\theta\in\Theta'.
$$
Then the sequence $(S_{n})_{n\in\NNN}$ satisfies property~(\ref{fdCLT-convergence}) (with this $\Theta'$).
\end{proposition}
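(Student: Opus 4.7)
The plan is to verify, for each fixed $\theta\in\Theta'$, that the i.i.d.\ partial sums $S_n(\cdot,\theta)=\frac{1}{\sqrt n}\sum_{i=1}^{n}\big(X_i(\cdot,\theta)-\EEE^B[X_i(\cdot,\theta)]\big)$ converge weakly in $\cX$ to a centered Gaussian law, since weak convergence entails uniform tightness. The natural tool is the classical central limit theorem in cotype~2 Banach spaces (see \cite{ChobanjanTarieladze1977}, as well as \cite[Theorem~10.7]{LedouxTalagrand1991}), which asserts that a centered, tight, square-integrable Borel random element in a cotype~2 space satisfies the CLT \emph{if and only if} it is pre-gaussian. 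Hence the proof reduces to checking that $W^{\theta}:=X_1(\cdot,\theta)-\EEE^B[X_1(\cdot,\theta)]$ meets these hypotheses.

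First I would check square-integrability: since $\|X_1(\cdot,\theta)\|_{\cX}\le\|X_1\|_{\infty}$ and $\|X_1\|_{\infty}$ is square-integrable by assumption, the same holds for $X_1(\cdot,\theta)$, and then for the centered version $W^{\theta}$. Centering is automatic by construction. Next I would establish that $W^{\theta}$ is a Radon (tight) Borel random element in $\cX$, repeating the argument from the proof of Proposition~\ref{einfach tightness}: as $X_1$ is Bochner-integrable in $\cC(\Theta,\cX)$, it is almost surely concentrated on some separable closed linear subspace $\widehat C\subseteq\cC(\Theta,\cX)$. Applying the continuous projection $\pi_{\theta}\colon f\mapsto f(\theta)$ shows that $X_1(\cdot,\theta)$, and hence $W^{\theta}$, is almost surely concentrated on the separable closed subspace $\pi_{\theta}(\widehat C)\subseteq\cX$. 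Completeness of $\cX$ together with \cite[p.~29, Corollary]{VakhaniaEtAl.1987} then yields that $W^{\theta}$ is Radon. The pre-gaussianity of $W^{\theta}$ is exactly the standing assumption on $\theta\in\Theta'$.

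Having verified these four properties, I invoke the cotype~2 CLT to conclude that $S_n(\cdot,\theta)$ converges weakly to a centered Gaussian Radon measure in $\cX$. Uniform tightness of $(S_n(\cdot,\theta))_{n\in\NNN}$ is then immediate, for instance from the direct part of Prokhorov's theorem applied within the separable closed subspace $\pi_{\theta}(\widehat C)$ on which all $S_n(\cdot,\theta)$ are concentrated. This establishes property~(\ref{fdCLT-convergence}).

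I do not anticipate serious obstacles; the argument is essentially a bookkeeping step that packages the hypotheses of the proposition into the input format of the cotype~2 CLT. The only point requiring a bit of care is the Radon/tight property of $W^{\theta}$, because the assumption ``pre-gaussian'' is customarily phrased for tight random elements, so this issue must be settled \emph{before} invoking the cotype~2 CLT; the Bochner-integrability of $X_1$ in $\cC(\Theta,\cX)$ transports tightness from $\cC(\Theta,\cX)$ to $\cX$ via the continuous linear map $\pi_{\theta}$ exactly as in the proof of Proposition~\ref{einfach tightness}.
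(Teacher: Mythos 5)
Your proposal is correct and follows essentially the same route as the paper: verify that $X_1(\cdot,\theta)-\EEE^B[X_1(\cdot,\theta)]$ is a tight (Radon) centered Borel random element of $\cX$ by pushing the almost sure separable-valuedness of $X_1$ through the continuous projection $\pi_\theta$ (exactly as in the proof of Proposition~\ref{CLTType2}), then invoke the cotype~2 central limit theorem together with the pre-gaussianity assumption and Prokhorov's theorem for Radon measures to get weak convergence, hence uniform tightness, of $(S_n(\cdot,\theta))_{n\in\NNN}$ for each $\theta\in\Theta'$. Your write-up just makes explicit the bookkeeping (square-integrability, centering, tightness) that the paper compresses into a reference to the earlier proof.
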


\begin{proof}
First note that $X_1(\cdot,\theta)$ is a tight Borel random element in $\cX$ for every $\theta\in\Theta$ (cf.\ the proof of Proposition~\ref{CLTType2}).
Now the claim of Proposition~\ref{CLTCoType2} follows from the general central limit theorem in cotype~2 - Banach spaces (see \cite[Theorem~4.1]{ChobanjanTarieladze1977} or \cite[Theorem~10.7]{LedouxTalagrand1991}) along with the version of Prokhorov's theorem for Radon measures 
(see, e.g., \cite[Theorem~I.3.6]{VakhaniaEtAl.1987}).
\end{proof}

\begin{normalremark}
As a prominent example let $\cX$ be  an $L^{p}$-space on some $\sigma$-finite measure space $(\mathfrak{X},\cA,\nu)$ with $L^{p}$-norm $\|\cdot\|_{p}$ for $p\in [1,2]$. Then it is a cotype~2 - Banach space (see \cite[p.~188]{AraujoGine1980}). Moreover, for any $\theta\in\Theta$, the tight Borel random element $X_{1}(\cdot,\theta) - \EEE^{B}[X_{1}(\cdot,\theta)]$ is
pre-gaussian if and only if $L\circ \big(X_{1}(\cdot,\theta) - \EEE^{B}[X_{1}(\cdot,\theta)]\big)$ is square integrable for every continuous linear form $L$ on $\cX$, and
$$
\int_{\mathfrak{X}}\left(\EEE\left[\Big(X_{1}(\cdot,\theta)_{|x} - \EEE^{B}[X_{1}(\cdot,\theta)]_{|x}\Big)^{2}\right]\right)^{p/2}~\nu(dx) < \infty
$$
(see \cite[Theorem~11]{Jain1977}).
\end{normalremark}

\section{Proofs}\label{proofs}

Let us retake general assumptions and notations from Section~\ref{path continuity}. One key of our proofs is the following auxiliary technical result which extends Lemma~B.2.7 in \cite{Talagrand2014}. For a finite set $B$ we shall use notation ${\textrm card}(B)$ to denote  its cardinality.

\begin{lemma}
\label{TalagrandLemma B.2.7}
Let $\overline{\Theta}$ be some nonvoid finite subset of $\Theta$, and let $A\geq 1$ as well as $r\in\NNN$ such that $A^{r}\geq {\textrm card}(\overline{\Theta})$. Then for $c > 0$ there exists some $U\subseteq \overline{\Theta}\times\overline{\Theta}$ satisfying
\begin{eqnarray}
&&
\label{cardinality U} 
{\textrm card}(U)\leq A\cdot {\textrm card}(\overline{\Theta}).\\
&&
\label{diameter induced by U}
(\theta,\vartheta)\in U\quad\Rightarrow\quad d_{\Theta}(\theta,\vartheta)\leq c~r.\\
&&
\label{maximal increments}
\sup_{\theta,\vartheta\in\overline{\Theta}\atop d_{\Theta}(\theta,\vartheta)\leq c} d_{\cX}(X_{\theta},X_{\vartheta})\leq 2\sup_{(\theta,\vartheta)\in U} d_{\cX}(X_{\theta}, X_{\vartheta}).
\end{eqnarray}
\end{lemma}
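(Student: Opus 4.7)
This lemma is a metric-valued extension of Talagrand's Lemma~B.2.7 in \cite{Talagrand2014}, and my plan is to adapt his proof. The first observation is that the content is essentially combinatorial: condition~(\ref{maximal increments}) will follow from the triangle inequality for $d_\cX$ once we ensure that every pair $(\theta,\vartheta)\in\overline\Theta\times\overline\Theta$ with $d_\Theta(\theta,\vartheta)\le c$ either lies in $U$ or admits some $\alpha\in\overline\Theta$ with both $(\theta,\alpha),(\alpha,\vartheta)\in U$. Neither the process $(X_\theta)$ nor the geometric assumption~(\ref{size of parameter space}) enters the construction of $U$; only the metric $d_\Theta$ on $\overline\Theta$ and the combinatorial parameters $A,r,c$ matter. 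The generalization from $\cX=\RRR$ to an arbitrary metric target is therefore essentially automatic once the combinatorial skeleton is in place.

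The argument proceeds by induction on $r$. The base case $r=1$ is trivial: since $A\ge\textrm{card}(\overline\Theta)$, setting $U=\{(\theta,\vartheta)\in\overline\Theta^{2}:d_\Theta(\theta,\vartheta)\le c\}$ gives $\textrm{card}(U)\le\textrm{card}(\overline\Theta)^{2}\le A\cdot\textrm{card}(\overline\Theta)$, and both (\ref{diameter induced by U}) and (\ref{maximal increments}) hold immediately. For the inductive step from $r-1$ to $r$, I would partition $\overline\Theta$ into disjoint blocks $B_{1},\ldots,B_{s}$ with $s\le A$ and $\textrm{card}(B_{i})\le A^{r-1}$ — which is always possible because $\textrm{card}(\overline\Theta)\le A^{r}$ — choosing the partition adapted to $d_\Theta$ via a maximal $c$-separated subset of $\overline\Theta$ as anchors, so that cross-block close pairs can be routed through a single additional long edge between two anchors. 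Applying the inductive hypothesis to each $B_{i}$ with parameter $r-1$ yields sets $U_{i}\subseteq B_{i}\times B_{i}$ with $\textrm{card}(U_{i})\le A\cdot\textrm{card}(B_{i})$ and the length-$\le 2$ chain property within $B_{i}$, and I would set $U=\bigcup_{i}U_{i}\cup V$, where $V$ is a small collection of anchor-to-anchor edges dealing with the inter-block close pairs.

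The main obstacle is the delicate bookkeeping needed to keep $\textrm{card}(U)\le A\cdot\textrm{card}(\overline\Theta)$ \emph{and} the length-$\le 2$ chain condition simultaneously valid for every close pair, including those straddling two blocks. Since the sum $\sum_{i}\textrm{card}(U_{i})$ already saturates the budget in the worst case, the extra set $V$ can be accommodated only by applying induction inside each block with a slightly smaller constant, or by an amortized argument that exploits the fact that neighboring anchors contribute few inter-block pairs. The diameter bound $d_\Theta(\theta,\vartheta)\le cr$ for $(\theta,\vartheta)\in U$ is then propagated through the recursion by the triangle inequality: a recursion tree of depth $r$ adds at most $c$ per level because the block anchors at each level are, by construction, within $c$ of their neighbors. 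I expect the lion's share of the technical work of the proof to lie precisely in this bookkeeping, which is the heart of Talagrand's original chaining argument.
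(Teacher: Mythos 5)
Your reduction of~(\ref{maximal increments}) to a purely combinatorial ``length-$\le 2$ chain'' property, and your observation that neither the process nor condition~(\ref{size of parameter space}) enters, are both correct, and the base case $r=1$ is fine. But the inductive step is where the entire content of the lemma lives, and it is not carried out; worse, the specific scheme you sketch cannot work as stated. First, the cardinality budget: the recursive calls already give $\sum_i\textrm{card}(U_i)\le A\sum_i\textrm{card}(B_i)=A\cdot\textrm{card}(\overline{\Theta})$, so there is no room left for the cross-block edge set $V$ -- you acknowledge this but only gesture at ``a slightly smaller constant'' or ``an amortized argument'' without producing either. Second, and more fundamentally, routing a cross-block close pair $(\theta,\vartheta)$ as $\theta\to\alpha_i\to\alpha_j\to\vartheta$ is a chain of length at least $3$ (and the within-block legs $\theta\to\alpha_i$ may themselves require two edges), which destroys the constant $2$ in~(\ref{maximal increments}). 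Third, the diameter propagation is inconsistent: anchors forming a maximal $c$-separated set are pairwise at distance $>c$, not ``within $c$ of their neighbors,'' and an anchor-to-anchor edge joining blocks of diameter up to $c(r-1)$ can easily exceed length $cr$.

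The paper avoids all three problems by not partitioning at all. Following Talagrand, one greedily peels $\overline{\Theta}$: with $V_1=\overline{\Theta}$, at each stage pick $\theta_l\in V_l$ and the \emph{smallest} $r_l\in\{1,\dots,r\}$ such that $\textrm{card}(\{\theta\in V_l\mid d_\Theta(\theta,\theta_l)\le r_lc\})\le A^{r_l}$ (this exists because $\textrm{card}(V_l)\le A^r$), put all edges $(\theta_l,\theta)$ with $\theta\in V_l$, $d_\Theta(\theta_l,\theta)\le r_lc$ into $U$, and set $V_{l+1}=\{\theta\in V_l\mid d_\Theta(\theta,\theta_l)>(r_l-1)c\}$. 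Minimality of $r_l$ forces each stage to delete at least $A^{r_l-1}$ points, which yields the amortized bound $\sum_l A^{r_l}\le A\cdot\textrm{card}(\overline{\Theta})$ and hence~(\ref{cardinality U}); every edge has length $\le r_lc\le rc$, giving~(\ref{diameter induced by U}); and a close pair $\theta,\vartheta$ is routed through the \emph{single common center} $\theta_{l_*}$ of the last stage containing both, since the one deleted at stage $l_*$ satisfies $d_\Theta(\theta_{l_*},\cdot)\le(r_{l_*}-1)c$ and the other then satisfies $d_\Theta(\theta_{l_*},\cdot)\le r_{l_*}c$ by the triangle inequality -- a chain of exactly two $U$-edges, which is what produces the factor $2$. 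This star-shaped, center-based routing is the key idea missing from your proposal.
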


\begin{proof}
According to the proof of Lemma~B.2.7 in \cite{Talagrand2014} we may find a sequence $(V_{l})_{l\in\NNN}$ of subsets of $\overline{\Theta}$, a sequence $(\theta_{l})_{l\in\NNN}$ in $\overline{\Theta}$ as well as a sequence $(r_{l})_{\in\NNN}$ in $\{1,\dots,r\}$ such that the following properties are satisfied
\begin{itemize}
\item $V_{1} = \overline{\Theta}$ and $\exists~ l_{0}\in\NNN~\forall~l\in\NNN,~l\geq l_{0}: V_{l} = \emptyset$.\\
\item $\theta_{l}\in V_{l}$ if $V_{l}\not=\emptyset$.\\
\item ${\textrm card}\big(\{\theta\in V_{l}\mid d_{\Theta}(\theta,\theta_{l})\leq r_{l} c\}\big)\leq A^{r_{l}}$ if $V_{l}\not=\emptyset$.\\
\item $V_{l+1} = V_{l}\setminus\{\theta\in V_{l}\mid d_{\Theta}(\theta,\theta_{l})\leq (r_{l} - 1) c\} = \{\theta\in V_{l}\mid d_{\Theta}(\theta,\theta_{l}) > (r_{l} - 1) c\}$ if $V_{l}\not=\emptyset$.\\
\item $\sum\limits_{l=1\atop V_{l}\not=\emptyset}^{\infty} A^{r_{l}}\leq A\cdot {\textrm card}(\overline{\Theta}).$
\end{itemize}
We shall show that the set
$$
U := \bigcup_{l= 1\atop V_{l}\not=\emptyset}^{\infty}\left\{(\theta_{l},\theta)\mid \theta\in V_{l},~d_{\Theta}(\theta_{l},\theta)\leq c r_{l}\right\}
$$
is as required.

\medskip 
First of all
\begin{eqnarray*}
{\textrm card}(U) 
&\leq& 
\sum_{l=1\atop V_{l}\not=\emptyset}^{\infty}{\textrm card}\left(\left\{(\theta_{l},\theta)\mid \theta\in V_{l},~d_{\Theta}(\theta_{l},\theta)\leq c r_{l}\right\}\right)\\
&=&
\sum_{l=1\atop V_{l}\not=\emptyset}^{\infty}{\textrm card}\left(\left\{\theta\in V_{l}\mid~d_{\Theta}(\theta_{l},\theta)\leq c r_{l}\right\}\right)
\leq \sum_{l=1\atop V_{l}\not=\emptyset}^{\infty} A^{r_{l}}\leq A\cdot {\textrm card}(\overline{\Theta})
\end{eqnarray*}
so that $U$ fulfills (\ref{cardinality U}). 

\medskip
Secondly, let $(\theta,\vartheta)\in U$. Then $\theta = \theta_{l}$ and $\vartheta\in V_{l}$ with $d_{\Theta}(\theta_{l},\vartheta)\leq c r_{l}$ for some $l\in\NNN$ with $V_{l}\not=\emptyset$. This means $d_{\Theta}(\theta,\vartheta)\leq c r$ because $r_{l}\leq r$. Thus (\ref{diameter induced by U}) holds for $U$. So it remains to show that (\ref{maximal increments}) is valid for $U$.

\medskip
Let $\theta,\vartheta\in\overline{\Theta}$ with $d_{\Theta}(\theta,\vartheta)\leq c$. By construction $\theta,\vartheta\in V_{1}$, whereas neither $\theta$ nor $\vartheta$ belongs to $V_{l}$ for $l\geq l_{0}$. So we may choose $l_{*} := \max\{l\in\NNN\mid \theta,\vartheta\in V_{l}\}$. Then $\theta\not\in V_{l_{*} + 1}$ or $\vartheta\not\in V_{l_{*} + 1}$, without loss of generality $\vartheta\not\in V_{l_{*} + 1}$. This means 
$d_{\Theta}(\theta_{l_{*}},\vartheta)\leq (r_{l_{*}} - 1) c$ so that also
$$
d_{\Theta}(\theta_{l_{*}},\theta)\leq d_{\Theta}(\theta_{l_{*}},\vartheta) + d_{\Theta}(\vartheta,\theta)\leq r_{l_{*}} c.
$$
Hence $(\theta_{l_{*}},\theta), (\theta_{l_{*}},\vartheta)\in U$, and thus
$$
d_{\cX}(X_{\theta},X_{\vartheta})\leq d_{\cX}(X_{\theta}, X_{\theta_{l_{*}}}) + 
d_{\cX}(X_{\theta_{l*}},X_{\vartheta})\leq 2\sup_{(\theta,\vartheta)\in U}d_{\cX}(X_{\theta},X_{\vartheta}).
$$
This shows (\ref{maximal increments}) and completes the proof.
\end{proof}

\subsection{Proof of Lemma~\protect\ref{Lemma B.2.7}}\label{proof of Lemma B.2.7}

In the first step we want to point out the central chaining argument that we shall use for the proof of Lemma~\ref{Lemma B.2.7}.

\begin{lemma}
\label{chaining lemma}
Let $\overline{\Theta}\subseteq\Theta$ be finite with at least two elements. Let $n_{0}$ be the largest element in $\ZZZ$ such that $\Delta(\overline{\Theta})\leq 2^{-n_{0}}$, and let 
$$
n_{1} := \min\left\{n\in\ZZZ\mid 2^{-n} < \inf_{\theta,\vartheta\in\overline{\Theta}\atop \theta\not=\vartheta}d_{\Theta}(\theta,\vartheta)\right\}.
$$
Then $n_{0} < n_{1}$, and the following statements are valid. 
\begin{itemize}
\item [1)] There exists a family $\{\Theta_{n}\mid n = n_{0},\dots,n_{1}\}$ of subsets of $\overline{\Theta}$ satisfying
\begin{eqnarray}
&&\label{Kardinalitaet der Chaining-Mengen}
{\textrm card}(\Theta_{n}) = N(\overline{\Theta},d_{\Theta},2^{-n})\quad\mbox{for}~n\in\{n_{0},\dots,n_{1}\},\\
&&
\label{Abstaende zu den Chaining-Mengen}
\inf_{\vartheta\in\Theta_{n}}d_{\Theta}(\theta,\vartheta) \leq 2^{-n}\quad\mbox{for}~n\in\{n_{0},\dots,n_{1}\}~\mbox{and}~\theta\in\overline{\Theta}.
\end{eqnarray}
\item [2)] The family $\{\Theta_{n}\mid n = n_{0},\dots,n_{1}\}$ from statement 1) may be associated with 
a family 
 $\{\varphi_{n}\mid n = n_{0},\dots,n_{1}\}$ of mappings $\varphi_{n}:\overline{\Theta}\rightarrow\Theta_{n}$ which fulfill the following properties:
 \begin{align}
 &\label{identity}
 \varphi_{n_{1}}:\overline{\Theta}\rightarrow\overline{\Theta},~\theta\mapsto\theta,\\
  & \label{constant mapping}
 \varphi_{n_{0}}\equiv\theta_{0}\quad\mbox{for some}~\vartheta_{0}\in\overline{\Theta}.\\[0.2cm]
 &
 \label{Abstand 1}
 d_{\Theta}\big(\varphi_{n+1}(\theta),\varphi_{n}(\theta)\big)\leq 2^{-n}\quad\mbox{for}~n\in\{n_{0},\dots,n_{1}-1\}~\mbox{and}~\theta\in\overline{\Theta},\\[0.2cm]
 &
 \label{Kardinalitaet Indexmengen}
 {\textrm card}\left(\left\{\big(\varphi_{n+1}(\theta),\varphi_{n}(\theta)\big)\mid \theta\in\overline{\Theta}\right\}\right)\leq  N(\overline{\Theta},d_{\Theta},2^{-(n + 1)})\quad\mbox{if}~n\in\{n_{0},\dots,n_{1}-1\}
 ,\\[0.2cm]
 &
 \label{Abstand 2}
 d_{\Theta}\big(\varphi_{n}(\theta),\varphi_{n}(\vartheta)\big)\leq 2^{-n + 2} + d_{\Theta}(\theta,\vartheta)\quad\mbox{for}~n\in\{n_{0},\dots,n_{1}\}~\mbox{and}~\theta, \vartheta\in\overline{\Theta}.
 \end{align}
 \item[3)] The chaining inequality
 \begin{equation}
 \label{chaining identity}
  d_{\cX}(X_{\theta},X_{\varphi_{n}(\theta)}) \leq \sum_{k = n}^{n_{1} - 1}d_{\cX}(X_{\varphi_{k+1}(\theta)},X_{\varphi_{k}(\theta)})\quad\mbox{for}~
 \theta\in\overline{\Theta}
  \end{equation} 
 is satisfied if $n\in\{n_{0},\dots,n_{1}-1\}$.
 \item [4)]
 Under assumptions (\ref{size of parameter space}) and (\ref{increment inequality}) from Theorem \ref{Kolmogorov-Chentsov} with $C > 0$, $q > t > 0$ the inequality
 \begin{equation}
 \label{Abschaetzung 1}
 \EEE\left[\sup_{\theta\in\overline{\Theta}} d_{\cX}(X_{\theta},X_{\varphi_{n}(\theta)})^{p}\right]
 \leq
 M~\left(\sum_{k=n}^{n_{1}-1} \frac{N(\overline{\Theta},d_{\Theta},2^{- (k+1)})^{1/p}}{2^{k q/p}}\right)^{p}
  \end{equation}
  holds for every $n\in\{n_{0},\dots,n_{1} - 1\}$. Furthermore 
  \begin{align}
  \label{Abschaetzung 1b}
\EEE\left[\sup_{\theta\in\overline{\Theta}}d_{\cX}(X_{\theta},X_{\varphi_{n}(\theta)})^{p}\right]
\leq 
\bcswitch
M~C~2^{2t}~ \dfrac{2^{(-n + 1) (q-t)}}{\big(2^{(q - t)/p} - 1\big)^{p}}&n_{1}\leq 0\\[0.3cm] 
M~C~2^{2t}~\Big(\dfrac{2^{(-n + 1) (q-t)/p} + 2^{(q-t)/p}}{2^{(q - t)/p} - 1}\Big)^{p}&n< 0  < n_{1}\\[0.3cm]
M~C~2^{q + t}~\dfrac{2^{-n (q-t)}}{\big(2^{(q - t)/p} - 1\big)^{p}}
&n \geq 0 
\ecswitch
\end{align}
for $n\in\{n_{0},\dots,n_{1} - 1\}$.
\end{itemize}
\end{lemma}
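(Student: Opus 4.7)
The plan is to set up a dyadic chaining structure on $\overline{\Theta}$ and then push the moment assumption~(\ref{increment inequality}) along this chain, using the covering bound~(\ref{size of parameter space}) to control cardinalities. First, $n_0 < n_1$ holds because $2^{-n_1} < \inf_{\theta \neq \vartheta}d_\Theta(\theta,\vartheta) \leq \Delta(\overline{\Theta}) \leq 2^{-n_0}$, where the middle inequality uses $|\overline{\Theta}| \geq 2$. For Part~1 I would choose, for each $n \in \{n_0,\ldots,n_1\}$, a subset $\Theta_n \subseteq \overline{\Theta}$ of cardinality $N(\overline{\Theta}, d_\Theta, 2^{-n})$ that is $2^{-n}$-dense in $\overline{\Theta}$; the endpoints are automatic, with $\Theta_{n_0}$ a singleton (since $2^{-n_0} \geq \Delta(\overline{\Theta})$ means a single ball covers $\overline{\Theta}$) and $\Theta_{n_1} = \overline{\Theta}$ (since $2^{-n_1}$ is strictly smaller than the minimal positive distance in $\overline{\Theta}$, forcing every point to serve as its own center). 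Part~3 is a one-line telescoping application of the triangle inequality in~$\cX$, using $\varphi_{n_1}(\theta) = \theta$.

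For Part~2 I would build the maps $\varphi_n$ from the top down: set $\varphi_{n_1} := \mathrm{id}$, choose for each $n$ a nearest-point map $g_n\colon \Theta_{n+1} \to \Theta_n$, and define $\varphi_n := g_n \circ \varphi_{n+1}$. Property~(\ref{identity}) holds by construction, (\ref{constant mapping}) follows because $|\Theta_{n_0}| = 1$, and~(\ref{Abstand 1}) holds because $g_n$ displaces each point of $\Theta_{n+1} \subseteq \overline{\Theta}$ by at most $2^{-n}$ by the $2^{-n}$-density of $\Theta_n$. The key point for~(\ref{Kardinalitaet Indexmengen}) is that $\varphi_n$ is a function of $\varphi_{n+1}$, so the number of distinct pairs $(\varphi_{n+1}(\theta), \varphi_n(\theta))$ is at most $|\Theta_{n+1}| = N(\overline{\Theta}, d_\Theta, 2^{-(n+1)})$. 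Finally for~(\ref{Abstand 2}), summing~(\ref{Abstand 1}) along the chain gives $d_\Theta(\theta, \varphi_n(\theta)) \leq \sum_{k \geq n} 2^{-k} = 2^{-n+1}$, and one application of the triangle inequality completes the bound.

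For~(\ref{Abschaetzung 1}) I would apply Minkowski's $L^p$-triangle inequality to~(\ref{chaining identity}). For each level~$k$, the variable $d_\cX(X_{\varphi_{k+1}(\theta)}, X_{\varphi_k(\theta)})$ takes at most $N(\overline{\Theta}, d_\Theta, 2^{-(k+1)})$ distinct values as $\theta$ varies over $\overline{\Theta}$ (by~(\ref{Kardinalitaet Indexmengen})), and each such value has $p$-th moment at most $M \cdot 2^{-kq}$ (by~(\ref{Abstand 1}) and~(\ref{increment inequality})). The crude union bound $\EEE[\max(\cdot)^p] \leq \sum \EEE[(\cdot)^p]$ followed by a $p$-th root yields the sum appearing in~(\ref{Abschaetzung 1}).

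For~(\ref{Abschaetzung 1b}) I would substitute $N(\overline{\Theta}, d_\Theta, 2^{-(k+1)}) \leq C \cdot 2^{(k+1)t}$ into~(\ref{Abschaetzung 1}); this is valid for every $k \geq n_0$ since $2^{-(k+1)} < \Delta(\overline{\Theta}) \leq \Delta(\Theta)$. The inner sum then collapses to a geometric series $\sum_{k=n}^{n_1-1} 2^{-k(q-t)/p}$ with ratio $2^{-(q-t)/p} \in (0,1)$, and the three cases in~(\ref{Abschaetzung 1b}) correspond to elementary estimates of this series: for $n \geq 0$, bound it by its first term times $1/(1-2^{-(q-t)/p})$; for $n_1 \leq 0$, re-index from the top to bound it by the last term times the same constant; and for $n < 0 < n_1$, split at $k=0$ and estimate each piece separately. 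I expect this final geometric-series bookkeeping to be the main tedious step, though the real conceptual content lies in the chaining construction of $\varphi_n$ combined with~(\ref{Kardinalitaet Indexmengen}), which is what enables a sharp $L^p$-control of the maxima at each level.
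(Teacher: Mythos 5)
Your proposal is correct and follows essentially the same route as the paper: the nets come straight from the definition of covering numbers, the maps are the composed nearest-point projections $\varphi_n=g_n\circ\varphi_{n+1}$ (the composition structure being exactly what yields~(\ref{Kardinalitaet Indexmengen})), and part~4) is Minkowski plus a union bound over the pair sets followed by geometric summation, just as in the paper (which outsources part~2) to Talagrand). One minor slip in your bookkeeping for~(\ref{Abschaetzung 1b}): since the ratio $2^{-(q-t)/p}$ is less than $1$ in every case, the geometric series $\sum_{k=n}^{n_1-1}2^{-k(q-t)/p}$ is always dominated by its \emph{first} term (not, as you say for $n_1\le 0$, its last), and the uniform bound $2^{-n(q-t)/p}\big/\big(1-2^{-(q-t)/p}\big)$ already yields $M\,C\,2^{t}\,2^{(-n+1)(q-t)}\big/\big(2^{(q-t)/p}-1\big)^{p}$, which implies all three displayed cases at once.
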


\begin{proof}
Statement 1) follows immediately from the definition of covering numbers. Furthermore, by construction we have
\begin{equation}
\label{Einrahmung Laufindex}
n_{0} < n_{1}\quad\mbox{and}\quad N(\overline{\Theta},d_{\Theta},2^{-n_{0}}) = 1,~N(\overline{\Theta},d_{\Theta},2^{-n_{1}}) = {\textrm card}(\overline{\Theta}).
\end{equation} 
Then the proof of statement 2) can be found in \cite[pp. 608f.]{Talagrand2014}. In view of (\ref{identity}), statement 3) may be verified easily by backward induction along with triangle inequality. So it remains to show statement 4).

\medskip
Let $n\in\{n_{0},\dots,n_{1}-1\}$. By chaining inequality (\ref{chaining identity}) we have 
\begin{eqnarray*}
\sup_{\theta\in\overline{\Theta}}d_{\cX}(X_{\theta},X_{\varphi_{n}(\theta)})^{p}
&\leq&
\sup_{\theta\in\overline{\Theta}}\Big(\sum_{k = n}^{n_{1} -1}d_{\cX}(X_{\varphi_{k+1}(\theta)},X_{\varphi_{k}(\theta)})\Big)^{p}\\
&\leq&
\Big(\sum_{k = n}^{n_{1} -1}\sup_{\theta\in\overline{\Theta}}d_{\cX}(X_{\varphi_{k+1}(\theta)}, X_{\varphi_{k}(\theta)})\Big)^{p}
\end{eqnarray*}
This implies by Minkowski's inequality
\begin{equation}
\label{allerersteAbschaetzung}
\left(\EEE\left[\sup_{\theta\in\overline{\Theta}}d_{\cX}(X_{\theta},X_{\varphi_{n}(\theta)})^{p}\right]\right)^{1/p} 
\leq 
\sum_{k = n}^{n_{1} -1}\left(\EEE\left[\sup_{\theta\in\overline{\Theta}}d_{\cX}(X_{\varphi_{k+1}(\theta)},X_{\varphi_{k}(\theta)})^{p}\right]\right)^{1/p}.
\end{equation}
Next, set for abbreviation 
$$
 I_{k} := \left\{\big(\varphi_{k+1}(\theta),\varphi_{k}(\theta)\big)\mid\theta\in\overline{\Theta}\right\}\quad\big(k\in\{n_{0},\dots,n_{1}-1\}\big).
 $$
Then we obtain in view of (\ref{increment inequality}) along with (\ref{Abstand 1}) and (\ref{Kardinalitaet Indexmengen})
 \begin{eqnarray*}
 \EEE\left[\sup_{(\alpha,\tilde{\alpha})\in I_{k}} d_{\cX}(X_{\alpha},X_{\tilde{\alpha}})^{p}\right]
 &\leq& 
 \sum_{(\alpha,\tilde{\alpha})\in I_{k}} \EEE\left[~d_{\cX}(X_{\alpha},X_{\tilde{\alpha}})^{p}~\right]\\
 &\stackrel{(\ref{Abstand 1}), (\ref{increment inequality})}{\leq}& 
 \frac{M~{\textrm card}(I_{k})}{2^{k q}}~ 
 \stackrel{(\ref{Kardinalitaet Indexmengen})}{\leq} 
\frac{M~ N(\overline{\Theta},d_{\Theta},2^{- (k+1)})}{2^{k q}}.
\end{eqnarray*}
By (\ref{allerersteAbschaetzung}) we end up with
\begin{eqnarray*}
\EEE\left[\sup_{\theta\in\overline{\Theta}}d_{\cX}(X_{\theta},X_{\varphi_{n}(\theta)})^{p}\right]
&\stackrel{(\ref{allerersteAbschaetzung})}{\leq}&\nonumber
\left(\sum_{k=n}^{n_{1}-1}\left(\EEE\left[\sup_{(\alpha,\tilde{\alpha})\in I_{k}} d_{\cX}(X_{\alpha},X_{\tilde{\alpha}})^{p}\right]\right)^{1/p}\right)^{p}\\
&\leq&
M~\left(\sum_{k=n}^{n_{1}-1} \frac{N(\overline{\Theta},d_{\Theta},2^{- (k+1)})^{1/p}}{2^{k q/p}}\right)^{p}. 
\end{eqnarray*}
 This shows (\ref{Abschaetzung 1}) of statement 4). For the remaining part of the proof we additionally assume that property (\ref{size of parameter space}) is satisfied with constants $C > 0, t \in ]0,q[$. Then we have 
$$
N(\overline{\Theta},d_{\Theta},2^{- (k+1)})\leq N(\Theta,d_{\Theta},2^{- (k+2)})\leq C~2^{t (k + 2)}\quad\mbox{for}~k\in\{n_{0},\dots,n_{1}-1\}.
$$ 
Note that $2^{- (k+1)} < \Delta(\overline{\Theta})\leq \Delta (\Theta)$ holds for every 
$k\in\{n_{0},\dots,n_{1}-1\}$ due to choice of $n_{0}$.
Now, (\ref{Abschaetzung 1b}) can be derived easily by routine calculations using geometric summation formulas.
This concludes the proof.
\end{proof}

\noindent
\textbf{Proof of Lemma~\ref{Lemma B.2.7}}
If $\delta <\inf\{d_{\Theta}(\theta,\vartheta)\mid \theta,\vartheta\in\overline{\Theta}, \theta\not=\vartheta\}$, then
$$
\big\{(\theta,\vartheta)\in\overline{\Theta}\times\overline{\Theta}\mid d_{\Theta}(\theta,\vartheta) \leq\delta\big\} 
= 
\big\{(\theta,\theta)\mid \theta\in\overline{\Theta}\big\}.
$$
In this case the statement of Lemma \ref{Lemma B.2.7} is trivial.

\medskip
From now on, let us assume $\delta \geq\inf\{d_{\Theta}(\theta,\vartheta)\mid \theta,\vartheta\in\overline{\Theta}, \theta\not=\vartheta\}$. In addition, let $n_{0}$ be the largest element in $\ZZZ$ such that $\Delta(\overline{\Theta})\leq 2^{-n_{0}}$, and let 
$$
n_{1} := \min\left\{n\in\ZZZ\mid 2^{-n} < \inf_{\theta,\vartheta\in\overline{\Theta}\atop \theta\not=\vartheta}d_{\Theta}(\theta,\vartheta)\right\}.
$$
We may find a family $\{\Theta_{n}\mid n = n_{0},\dots,n_{1}\}$ of subsets of $\overline{\Theta}$ and a family  
 $\{\varphi_{n}\mid n = n_{0},\dots,n_{1}\}$ of mappings $\varphi_{n}:\overline{\Theta}\rightarrow\Theta_{n}$ as in Lemma \ref{chaining lemma}.

\bigskip
If $N(\overline{\Theta},d_{\Theta},\delta/2) = 1$, then $\Delta(\overline{\Theta})\leq\delta$ so that 
$$
\EEE\left[\sup_{\theta,\vartheta\in\overline{\Theta}\atop d_{\Theta}(\theta,\vartheta)\leq\delta}d_{\cX}(X_{\theta}, X_{\vartheta})^{p}\right] 
= 
\EEE\left[\sup_{\theta,\vartheta\in\overline{\Theta}}d_{\cX}(X_{\theta},X_{\vartheta})^{p}\right],
$$ 
and in view of (\ref{constant mapping}) 
along with (\ref{Abschaetzung 1b})
\begin{eqnarray*}
\EEE\left[\sup_{\theta,\vartheta\in\overline{\Theta}}d_{\cX}(X_{\theta},X_{\vartheta})^{p}\right] 
&\leq& 
2^{p}~ \EEE\left[\sup_{\theta\in\overline{\Theta}}d_{\cX}(X_{\theta},X_{\varphi_{n_{0}}(\theta)})^{p}\right]\\
&\leq&
\bcswitch
M~C~2^{2t+p}~ \dfrac{2^{(-n_{0} + 1) (q-t)}}{\big(2^{(q - t)/p} - 1\big)^{p}}&n_{1}\leq 0\\[0.3cm] 
M~C~2^{2t+p}~\Big(\dfrac{2^{(-n_{0} + 1) (q-t)/p} + 2^{(q-t)/p}}{2^{(q - t)/p} - 1}\Big)^{p}&n_{0}< 0  < n_{1}\\[0.3cm]
M~C~2^{q+p + t}~\dfrac{2^{-n_{0} (q-t)}}{\big(2^{(q - t)/p} - 1\big)^{p}}
&n_{0} \geq 0 
\ecswitch.
\end{eqnarray*}
Moreover, by choice of $n_{0}$ we have 
$2^{-n_{0} + 1} < 4 \Delta (\overline{\Theta})\leq 4\delta $ so that 
routine calculations yield 
\begin{eqnarray*}
\EEE\left[\sup_{\theta,\vartheta\in\overline{\Theta}}d_{\cX}(X_{\theta},X_{\vartheta})^{p}\right] 
\leq 
\dfrac{2^{2 q + 2p}~M~C}{\big(2^{(q-t)/p} - 1\big)^{p}}~\delta^{q-t}.
\end{eqnarray*}
This shows Lemma \ref{Lemma B.2.7} in case of $N(\overline{\Theta},d_{\Theta},\delta/2) = 1$.

\bigskip 
Next, let us assume that  $N(\overline{\Theta},d_{\Theta},\delta/2)\geq 2$ is valid, and let us choose
$$
n_{2} := \max\{n\in\ZZZ\mid \delta\leq 2^{-n + 2}\}, ~n_{3} := n_{1}\wedge n_{2}\quad\mbox{and}\quad \overline{r} := \min\{r\in\NNN\mid 2^{r}\geq N(\overline{\Theta},d_{\Theta},\delta/4) \}.
$$
We have $2^{-n_{2} + 1} < \delta < 2 \Delta(\overline{\Theta})\leq 2^{-n_{0} + 1}$ so that $n_{2} > n_{0}$. By choice of $n_{2}$ we obtain
$$
\sup_{\theta,\vartheta\in\overline{\Theta}\atop d_{\Theta}(\theta,\vartheta)\leq\delta}d_{\cX}(X_{\theta}, X_{\vartheta}) 
\leq 
\sup_{\theta,\vartheta\in\overline{\Theta}\atop d_{\Theta}(\theta,\vartheta)\leq 2^{- n_{2} + 2}}d_{\cX}(X_{\theta}, X_{\vartheta}). 
$$
Moreover, for $\theta,\vartheta\in\overline{\Theta}$ with $d_{\Theta}(\theta,\vartheta)\leq 2^{- n_{2} + 2}$ we may further observe
\begin{eqnarray*}
d_{\cX}(X_{\theta},X_{\vartheta})
&\leq& 
d_{\cX}(X_{\theta},X_{\varphi_{n_{3}}(\theta)}) 
+ 
d_{\cX}(X_{\varphi_{n_{3}}(\theta)},X_{\varphi_{n_{3}}(\vartheta)}) 
+ 
d_{\cX}(X_{\vartheta},X_{\varphi_{n_{3}}(\vartheta)})\\
&\leq&
d_{\cX}(X_{\varphi_{n_{3}}(\theta)}, X_{\varphi_{n_{3}}(\vartheta)}) + 2~\sup_{\theta\in\overline{\Theta}}d_{\cX}(X_{\theta}, X_{\varphi_{n_{3}}(\theta)}).
\end{eqnarray*}
Then invoking (\ref{Abstand 2}), we obtain
$$
d_{\Theta}\big(\varphi_{n_{3}}(\theta), \varphi_{n_{3}}(\vartheta)\big) 
\leq 
2^{- n_{3} + 2} + d_{\Theta}(\theta,\vartheta) 
\leq 
2^{- n_{3} + 2} + 2^{- n_{2} + 2}\leq 2^{- n_{3} + 3}.
$$
Hence
\begin{align}
\label{Aufteilung der Ungleichung}
\sup_{\theta,\vartheta\in\overline{\Theta}\atop d_{\Theta}(\theta,\vartheta)\leq\delta}d_{\cX}(X_{\theta}, X_{\vartheta})
\leq 
\sup_{\theta,\vartheta\in\Theta_{n_{3}}\atop d_{\Theta}(\theta,\vartheta)\leq 2^{-n_{3} + 3}}d_{\cX}(X_{\theta}, X_{\vartheta}) 
+ 
2~\sup_{\theta\in\overline{\Theta}}d_{\cX}(X_{\theta}, X_{\varphi_{n_{3}}(\theta)}).
\end{align}
Furthermore, by (\ref{Kardinalitaet der Chaining-Mengen}) we may observe 
$$
2^{\overline{r}}\geq N(\overline{\Theta},d_{\Theta},\delta/4)\geq N(\overline{\Theta},d_{\Theta},2^{-n_{2}})\geq N(\overline{\Theta},d_{\Theta},2^{-n_{3}}) = {\textrm card}(\Theta_{n_{3}}).
$$
Therefore we may apply Lemma \ref{TalagrandLemma B.2.7} to $\Theta_{n_{3}}$ and $\overline{r}$, choosing 
$c := 2^{- n_{3} + 3}$ and $A = 2$. Hence we may find some $U\subseteq \Theta_{n_{3}}\times\Theta_{n_{3}}$ satisfying conditions (\ref{cardinality U}),  (\ref{diameter induced by U}) and (\ref{maximal increments}). Combination of    
(\ref{Aufteilung der Ungleichung}) with (\ref{maximal increments}) yields 
\begin{eqnarray*}
\sup_{\theta,\vartheta\in\overline{\Theta}\atop d_{\Theta}(\theta,\vartheta)\leq\delta}d_{\cX}(X_{\theta}, X_{\vartheta}) 
\leq 
2~ \sup_{(\theta,\vartheta)\in U}d_{\cX}(X_{\theta}, X_{\vartheta})~+~ 2~\sup_{\theta\in\overline{\Theta}}d_{\cX}(X_{\theta},X_{\varphi_{n_{3}}(\theta)})
\end{eqnarray*}
so that 
\begin{eqnarray*}
\sup_{\theta,\vartheta\in\overline{\Theta}\atop d_{\Theta}(\theta,\vartheta)\leq\delta}d_{\cX}(X_{\theta}, X_{\vartheta})^{p} 
\leq 
4^{p}~\left( \sup_{(\theta,\vartheta)\in U}d_{\cX}(X_{\theta}, X_{\vartheta})^{p}~+~ \sup_{\theta\in\overline{\Theta}}d_{\cX}(X_{\theta},X_{\varphi_{n_{3}}(\theta)})^{p}\right).
\end{eqnarray*}
Hence 
\begin{align}
&\nonumber
\EEE\left[\sup_{\theta,\vartheta\in\overline{\Theta}\atop d_{\Theta}(\theta,\vartheta)\leq\delta}d_{\cX}(X_{\theta}, X_{\vartheta})^{p}\right]\\ 
&\leq \label{Ausgangsungleichung}
4^{p}~\EEE\left[\sup_{(\theta,\vartheta)\in U} d_{\cX}(X_{\theta},X_{\vartheta})^{p}\right] + 4^{p}~\EEE\left[\sup_{\theta\in\overline{\Theta}} d_{\cX}(X_{\theta}, X_{\varphi_{n_{3}}(\theta)})^{p}\right].
\end{align}
If $n_{3} = n_{1}$, then $\varphi_{n_{3}}(\theta) = \theta$ for $\theta\in\overline{\Theta}$ due to (\ref{identity}). Hence
\begin{equation}
\label{unproblematische Ungleichung}
4^{p}~\EEE\left[\sup_{\theta\in\overline{\Theta}}d_{\cX}(X_{\theta}, X_{\varphi_{n_{3}}(\theta)})^{p}\right] = 0\quad\mbox{if}~n_{3} = n_{1}.
\end{equation}
So let us assume for a moment $n_{3} < n_{1}$. Then invoking property (\ref{size of parameter space}) and assumption (\ref{increment inequality}) we may conclude from Lemma \ref{chaining lemma}, (\ref{Abschaetzung 1b}) 
\begin{align}
\label{Abschaetzung 1c}
\EEE\left[\sup_{\theta\in\Theta}d_{\cX}(X_{\theta}, X_{\varphi_{n_{3}}(\theta)})^{p}\right] 
\leq 
\bcswitch
M~C~2^{2t}~ \dfrac{2^{(-n_{3} + 1) (q-t)}}{\big(2^{(q - t)/p} - 1\big)^{p}}&n_{1}\leq 0\\[0.3cm] 
M~C~2^{2t}~\Big(\dfrac{2^{(-n_{3} + 1) (q-t)/p} + 2^{(q-t)/p}}{2^{(q - t)/p} - 1}\Big)^{p}&n_{3}< 0  < n_{1}\\[0.3cm]
M~C~2^{q + t}~\dfrac{2^{-n_{3} (q-t)}}{\big(2^{(q - t)/p} - 1\big)^{p}}
&n_{3} \geq 0 
\ecswitch.
\end{align}
We also have $n_{3} = n_{2}$ so that the inequality $2^{-n_{3} + 1}\leq \delta$ is valid. 
Hence in view of (\ref{Abschaetzung 1c}) by easy calculations, we end up with
\begin{eqnarray}
4^{p}~\EEE\left[\sup_{\theta\in\overline{\Theta}}d_{\cX}(X_{\theta},X_{\varphi_{n_{3}}(\theta)})\right] 
&\leq& \label{problematischere Ungleichung}
\dfrac{2^{2t + 3 p}~M~C}{\big(2^{(q-t)/p} - 1\big)^{p}}~\delta^{q - t}\quad\mbox{if}~n_{3} < n_{1}.
\end{eqnarray}
Furthermore, applying sequentially (\ref{increment inequality}), (\ref{diameter induced by U}), (\ref{cardinality U}) and (\ref{Kardinalitaet der Chaining-Mengen}) we may observe \begin{eqnarray*}
\EEE\left[\sup_{(\theta,\vartheta)\in U}
d_{\cX}(X_{\theta},X_{\vartheta})^{p}
\right]
&\leq& \nonumber
\sum_{(\theta,\vartheta)\in U}
\EEE\left[~d_{\cX}(X_{\theta},X_{\vartheta})^{p}~\right]\\
&\stackrel{(\ref{increment inequality})}{\leq}& \nonumber
M~\sum_{(\theta,\vartheta)\in U}d_{\Theta}\big(\theta,\vartheta\big)^{q}\\
&\stackrel{(\ref{diameter induced by U})}{\leq}& \nonumber
M~{\textrm card}(U)~2^{q~(- n_{3} + 3)}~\overline{r}^{q}\\ 
 &\stackrel{(\ref{cardinality U})}{\leq}& \nonumber
2 M~ {\textrm card}\big(\Theta_{n_{3}}\big)~2^{q~(- n_{3} + 3)}~\overline{r}^{q}\\
&\stackrel{(\ref{Kardinalitaet der Chaining-Mengen})}{\leq}& 
2 M~N(\overline{\Theta},d_{\Theta},2^{-n_{3}})~2^{q~(- n_{3} + 3)}~\overline{r}^{q}\\
&\leq&
2 M~N(\overline{\Theta},d_{\Theta},2^{-n_{2}})~2^{q~(- n_{3} + 3)}~\overline{r}^{q}
\end{eqnarray*}
By choice of $n_{1}$ and $\delta$ we have $2^{-n_{3} + 3} = 2^{-n_{1} + 3} < 16 \delta$ if $n_{3} < n_{2}$. Otherwise, we obtain 
$2^{-n_{3} + 3} = 2^{-n_{2} + 1} 4 < 4 \delta$ due to definition of $n_{2}$. In addition $2^{-n_{2}}\geq\delta/4$. Hence 
$$
\EEE\left[\sup_{(\theta,\vartheta)\in U}
d_{\cX}(X_{\theta},X_{\vartheta})^{p}\right] 
\leq 
2 M~N(\overline{\Theta},d_{\Theta},\delta/4)~\delta^{q}~ (16\overline{r})^{q}
$$
The choice of $\overline{r}$ implies $2^{\overline{r}- 1} < N(\overline{\Theta},d_{\Theta},\delta/4)$ so that 
$$
\overline{r}\leq\dfrac{2\ln\big(N(\overline{\Theta},d_{\Theta},\delta/4)\big)}{\ln(2)}.
$$
Therefore
\begin{equation}
\label{naechste Abschaetzung}
4^{p}~ \EEE\left[\sup_{(\theta,\vartheta)\in U}
d_{\cX}(X_{\theta},X_{\vartheta})^{p}\right] 
\leq 
4^{p + 3 q + 1}~ 
M~N(\overline{\Theta},d_{\Theta},\delta/4)~
\left[\ln\big(N(\overline{\Theta},d_{\Theta},\delta/4)\big)\right]^{q}~\delta^{q}.
\end{equation}
Putting (\ref{Ausgangsungleichung}), (\ref{unproblematische Ungleichung}), (\ref{problematischere Ungleichung}) and~(\ref{naechste Abschaetzung}) together, we now easily derive the statement of Lemma~\ref{Lemma B.2.7} if $N(\overline{\Theta},d_{\Theta},\delta/2)\geq 2$. The proof is complete.\hfill$\Box$

\subsection{Proof of Theorem~\protect\ref{Kolmogorov-Chentsov}}\label{proof of main result}

Let (\ref{size of parameter space}) be satisfied with constants $C, t > 0$, and let $(X_{\theta})_{\theta\in\Theta}$ fulfill inequality (\ref{increment inequality}) with constants $M > 0, q >t$. Moreover, let us fix $\beta\in ]0,(q-t)/p[$. First, we want to show inequality (\ref{Hoelder-constant2}) for finite subsets of $\Theta$.

\begin{proposition}
\label{Kolmogorov-Chentsov_finite}
There exists a finite constant
$L(\Theta,C,t,M,p,q,\beta)$ that depends on
$\Delta(\Theta)$, $C$, $t$, $M$, $p$, $q$ and $\beta$ only
such that, for any finite subset $\overline{\Theta}\subseteq\Theta$ with at least two elements, it holds
$$
\EEE\left[\sup_{\theta,\vartheta\in\overline{\Theta}\atop \theta\not=\vartheta}~\frac{d_{\cX}(X_{\theta}, X_{\vartheta})^{p}}{d_{\Theta}(\theta,\vartheta)^{\beta p}}\right]\leq
L(\Theta,C,t,M,p,q,\beta).
$$
Moreover, $L(\Theta,C,t,M,p,q,\beta)$ can be chosen to depend linearly on $M$:
$L(\Theta,C,t,M,p,q,\beta)=M\overline L(\Theta,C,t,p,q,\beta)$.
\end{proposition}

\begin{proof}
Let $\overline{\Theta}$ be any finite subset of $\Theta$ with at least two elements. Set $\eta_{k} := 2^{-k}\big(\Delta(\Theta) + 1\big)$ for $k\in\NNN$, and let the set $J$ be defined to consist of all $k\in\NNN$ with 
$\eta_{k} < d_{\Theta}(\theta,\vartheta)\leq 2\eta_{k}$ for some $\theta,\vartheta\in\overline{\Theta}$. Note $J\not=\emptyset$. Then
\begin{align}
\EEE\left[\sup_{\theta,\vartheta\in\overline{\Theta}\atop \theta\not=\vartheta}~\frac{d_{\cX}(X_{\theta}, X_{\vartheta})^{p}}{d_{\Theta}(\theta,\vartheta)^{\beta p}}\right]
&\nonumber \leq 
\sum_{k\in J}\EEE\left[\sup~\left\{\frac{d_{\cX}(X_{\theta},X_{\vartheta})^{p}}{d_{\Theta}(\theta,\vartheta)^{\beta p}}~\Big|~ \theta,\vartheta\in\overline{\Theta},~\eta_{k}< d_{\Theta}(\theta,\vartheta) \leq 2 \eta_{k}\right\}\right]\\
&\nonumber \leq 
\sum_{k\in J}\eta_{k}^{-\beta p}~ \EEE\left[\sup~\left\{d_{\cX}(X_{\theta}, X_{\vartheta})^{p}~\Big|~ \theta,\vartheta\in\overline{\Theta},d_{\Theta}(\theta,\vartheta) \leq 2 \eta_{k}\right\}\right]\\
&\label{Ungleichung1} \leq 
\sum_{k\in J}2^{k \beta p}~ \EEE\left[\sup~\left\{d_{\cX}(X_{\theta},X_{\vartheta})^{p}~\Big|~ \theta,\vartheta\in\overline{\Theta},d_{\Theta}(\theta,\vartheta) \leq 2 \eta_{k}\right\}\right]
\end{align}
For $k\in J$ the application of Lemma \ref{Lemma B.2.7} yields
\begin{align}
&\nonumber\EEE\left[\sup~\left\{d_{\cX}(X_{\theta}, X_{\vartheta})^{p}~\Big|~ \theta,\vartheta\in\overline{\Theta},d_{\Theta}(\theta,\vartheta) \leq 2 \eta_{k}\right\}\right]\\
&\nonumber \leq 
4^{2 p + 4 q + 2}~M~\left(V_{k}(\Theta,\overline{\Theta})~ \left(2 \eta_{k}\right)^{q}~
+ 
\frac{C~\left(2 \eta_{k}\right)^{q-t}}{\big(2^{(q - t)/p} - 1\big)^{p}}\right)\\
&\label{Ungleichung2}\leq 
4^{2 p + 4 q + 2}~M~\big(\Delta(\Theta) + 1\big)^{q}~
\left(V_{k}(\Theta,\overline{\Theta})~ 2^{(-k + 1) q}~
+ 
\frac{C~2^{(-k + 1) (q-t)}}{\big(2^{(q - t)/p} - 1\big)^{p}}\right), 
 \end{align}
 where
 $$
V_{k}(\Theta,\overline{\Theta}) 
:= 
N\big(\overline{\Theta},d_{\Theta},\eta_{k+1}\big)~
\left[\ln\Big(N\big(\overline{\Theta},d_{\Theta},\eta_{k+1}\big)\Big)\right]^{q}.
$$
Moreover, the set $\{k\in\NNN\mid \eta_{k+1}\leq\Delta(\Theta)\}$ is nonvoid so that we may select its minimum say $k_{0}$. In view of (\ref{size of parameter space}) this means
\begin{eqnarray*}
N\big(\overline{\Theta},d_{\Theta},\eta_{k+1}\big) 
\leq  
C~ \left(\frac{2^{k + 1}}{\Delta(\Theta) + 1}\right)^{t}
\leq 
C~ 2^{(k + 1) t}\quad\mbox{for}~k\in\NNN, k\geq k_{0}.
\end{eqnarray*}
Hence for $k\in J$ with $k\geq k_{0}$ we may give a further upper estimate of inequality (\ref{Ungleichung2}) by
\begin{align*}
&\EEE\left[\sup~\left\{d_{\cX}(X_{\theta},X_{\vartheta})^{p}~\Big|~ \theta,\vartheta\in\overline{\Theta},d_{\Theta}(\theta,\vartheta) \leq 2 \eta_{k}\right\}\right]\\
&\leq 
4^{2 p + 4 q + 2}~2^{(-k + 1) (q-t)}~M~C~\big(\Delta(\Theta) + 1\big)^{q}~
\left(4^{t} \big[\ln\big(C\cdot 2^{(k + 1) t}\big)\big]^{q}
+ 
\frac{1}{\big(2^{(q - t)/p} - 1\big)^{p}}\right).
\end{align*}
Then 
\begin{align}
&\nonumber 2^{k \beta p}~\EEE\left[\sup~\left\{d_{\cX}(X_{\theta},X_{\vartheta})^{p}~\Big|~ \theta,\vartheta\in\overline{\Theta},d_{\Theta}(\theta,\vartheta) \leq 2 \eta_{k}\right\}\right]\\
&\label{Ungleichung3}\leq 
\dfrac{4^{2p + 5 q + 2}~M~C~\big(\Delta(\Theta) + 1\big)^{q}}{\big(2^{(q - t)/p} - 1\big)^{p}}~
2^{\left(\beta p - (q-t)\right) k}~\left(4^{t} \big[\ln\big(C\cdot 2^{(k + 1) t}\big)\big]^{q}\cdot \big(2^{(q - t)/p} - 1\big)^{p}
+ 
1\right)
\end{align}
holds for $k\in J$ with $k\geq k_{0}$. Next, setting 
$$
a_{k} := \dfrac{4^{2p + 5 q + 2}~M~C~\big(\Delta(\Theta) + 1\big)^{q}}{\big(2^{(q - t)/p} - 1\big)^{p}}~
2^{\left(\beta p - (q-t)\right) k}~\left(4^{t} \big[\ln\big(C\cdot 2^{(k + 1) t}\big)\big]^{q}\cdot \big(2^{(q - t)/p} - 1\big)^{p}
+ 
1\right)
$$
we may observe
$$
\lim_{k\to\infty}~\frac{|a_{k+1}|}{|a_{k}|} = 2^{\beta p - (q-t)}~\lim_{k\to\infty}
\dfrac{4^{t} \big[\ln\big(C\cdot 2^{(k + 2) t}\big)\big]^{q}\cdot \big(2^{(q - t)/p} - 1\big)^{p}
+ 1}{4^{t} \big[\ln\big(C\cdot 2^{(k + 1) t}\big)\big]^{q}\cdot \big(2^{(q - t)/p} - 1\big)^{p}
+ 1} =  2^{\beta p - (q-t)} < 1.
$$
Therefore 
\begin{align}
&
L_{1}(\Theta,C,t,M,p,q,\beta)
\notag\\
&:=
\frac{4^{2p + 5 q + 2}~M~C~\big(\Delta(\Theta) + 1\big)^{q}}{\big(2^{(q - t)/p} - 1\big)^{p}}~\sum_{k=k_{0}}^{\infty}
2^{\left(\beta p - (q-t)\right) k}~\left(4^{t} \big[\ln\big(C\cdot 2^{(k + 1) t}\big)\big]^{q}\cdot \big(2^{(q - t)/p} - 1\big)^{p}
+ 
1\right)
\notag\\
& < \infty.
\label{erster Koeffizient}
\end{align}
Moreover, by choice of $k_{0}$ we have 
$$
N\big(\overline{\Theta},d_{\Theta},\eta_{k+1}\big)  = 1~\mbox{if}~k\in\NNN, k < k_{0}
$$
which implies that $V_{k}(\Theta,\overline{\Theta}) = 0$ is valid for $k\in J$ with $k < k_{0}$. Then with $\sum\limits_{\emptyset} := 0$, the application of (\ref{Ungleichung2}) yields
\begin{align}
&\nonumber
\sum_{k=1\atop k\in J}^{k_{0} - 1}2^{k\beta p}~ \EEE\left[\sup~\left\{d_{\cX}(X_{\theta},X_{\vartheta})^{p}~\Big|~ \theta,\vartheta\in\overline{\Theta},d_{\Theta}(\theta,\vartheta) \leq 2^{-k +1}\big(\Delta(\Theta) + 1\big)\right\}\right]\\
&\nonumber \leq 
\frac{4^{2 p + 4 q + 2}~M~\big(\Delta(\Theta) + 1\big)^{q}}{\big(2^{(q - t)/p} - 1\big)^{p}}~
\sum_{k=1}^{k_{0} - 1}C~2^{(-k + 1) (q-t)}~2^{k\beta p}\\ 
&\label{Ungleichung4}\leq 
\frac{4^{2p + 5 q + 2 }~M~\big(\Delta(\Theta) + 1\big)^{q}}{\big(2^{(q - t)/p} - 1\big)^{p}}~
C~\sum_{k=1}^{\infty}2^{\left(\beta p - (q-t)\right)k}. 
\end{align}
Since $\beta p < q - t$ we obtain that
\begin{equation}
\label{zweiter Koeffizient}
L_{2}(\Theta,C,t,M,p,q,\beta)
: = \frac{4^{2p + 5 q + 2 }~M~\big(\Delta(\Theta) + 1\big)^{q}}{\big(2^{(q - t)/p} - 1\big)^{p}}~
C~\sum_{k=1}^{\infty}2^{\left(\beta p - (q-t)\right)k} < \infty.
\end{equation}
Combining (\ref{Ungleichung1}), (\ref{Ungleichung3}) and (\ref{Ungleichung4}) with (\ref{erster Koeffizient}) and (\ref{zweiter Koeffizient}), we end up with 
$$
\EEE\left[\sup_{\theta,\vartheta\in\overline{\Theta}\atop \theta\not=\vartheta}~\frac{d_{\cX}(X_{\theta}, X_{\vartheta})^{p}}{d_{\Theta}(\theta,\vartheta)^{\beta p}}\right]\leq \sum_{j=1}^{2}
L_{j}(\Theta,C,t,M,p,q,\beta)
=:L(\Theta,C,t,M,p,q,\beta).
$$
This yields the first claim of Proposition~\ref{Kolmogorov-Chentsov_finite}.
The second claim is a direct consequence of the expressions in (\ref{erster Koeffizient}) and~(\ref{zweiter Koeffizient}).
\end{proof}

\noindent
\textbf{Proof of Theorem~\ref{Kolmogorov-Chentsov}}
We first fix any $\beta\in ]0,(q-t)/p[$.
Let the constant
$L(\Theta,C,t,M,p,q,\beta)$
be chosen according to Proposition~\ref{Kolmogorov-Chentsov_finite}, and let us consider any at most countable subset $\overline{\Theta}$ of $\Theta$ which consists of at least two elements $\overline{\theta}, \overline{\vartheta}$. We may select some sequence $(\overline{\Theta}_{k})_{k\in\Theta}$ of nonvoid finite subsets of $\overline{\Theta}$ with at least two elements satisfying 
$$
\overline{\theta}, \overline{\vartheta}\in\overline{\Theta}_{k}\subseteq\overline{\Theta}_{k + 1}\quad\mbox{for}~k\in\NNN\quad\mbox{and}\quad\bigcup_{k= 1}^{\infty}\overline{\Theta}_{k} = \overline{\Theta}.
$$
Then 
$$
\sup_{\theta,\vartheta\in\overline{\Theta}_{k}\atop \theta\not=\vartheta}
\frac{d_{\cX}(X_{\theta},X_{\vartheta})^{p}}{d_{\Theta}(\theta,\vartheta)^{\beta p}}
\nearrow \sup_{k\in\NNN}\sup_{\theta,\vartheta\in\overline{\Theta}_{k}\atop \theta\not=\vartheta }\frac{d_{\cX}(X_{\theta},X_{\vartheta})^{p}}{d_{\Theta}(\theta,\vartheta)^{\beta p}} = \sup_{\theta,\vartheta\in\overline{\Theta}\atop \theta\not=\vartheta}\frac{d_{\cX}(X_{\theta},X_{\vartheta})^{p}}{d_{\Theta}(\theta,\vartheta)^{\beta p}},
$$
and thus by monotone convergence theorem along with Proposition~\ref{Kolmogorov-Chentsov_finite}
\begin{equation}
\label{monotoneKonvergenzUngleichung}
\EEE\left[\sup_{\theta,\vartheta\in\overline{\Theta}\atop \theta\not= \vartheta}\frac{d_{\cX}(X_{\theta}, X_{\vartheta})^{p}}{d_{\Theta}(\theta,\vartheta)^{\beta p}}\right]
=
\lim_{k\to\infty}\EEE\left[\sup_{\theta,\vartheta\in\overline{\Theta}_{k}\atop \theta \not = \vartheta}\frac{d_{\cX}(X_{\theta}, X_{\vartheta})^{p}}{d_{\Theta}(\theta,\vartheta)^{\beta p}}\right]
\le
L(\Theta,C,t,M,p,q,\beta).
\end{equation}
This shows \eqref{Hoelder-constant2} due to the second statement of Proposition \ref{Kolmogorov-Chentsov_finite}.

For the remaining part of the proof let us assume that $d_{\cX}$ is complete, and let $\overline{\Theta}$ be some at most countable subset of $\Theta$ which is dense w.r.t.\ $d_{\Theta}$. As a further consequence of (\ref{monotoneKonvergenzUngleichung}) we have $\MP(A) = 1$, where
$$
A := \left\{\sup_{\theta,\vartheta\in\overline{\Theta}\atop \theta\not= \vartheta}\frac{d_{\cX}(X_{\theta}, X_{\vartheta})^{p}}{d_{\Theta}(\theta,\vartheta)^{\beta p}} < \infty\right\}.
$$
This implies that on $A$ the  random process $(X_{\theta})_{\theta\in\overline{\Theta}}$ has 
H\"older-continuous paths of order $\beta$. By completeness of $d_{\cX}$ we may define a new random process $(\overline{X}_{\theta})_{\theta\in\Theta}$ via
$$
\overline{X}_{\theta}(\omega) := 
\begin{cases}
\lim\limits_{\vartheta\to\theta\atop \vartheta\in\overline{\Theta}}X_{\vartheta}(\omega),&\omega\in A,\\
\bar x,&\omega\not\in A,
\end{cases}
$$
where $\bar x\in\cX$ is arbitrary.
Clearly, this process has H\"older-continuous paths of order~$\beta$.
Furthermore, it can be shown by standard arguments that this random process satisfies~(\ref{eq:14122019a1}).
We now show that it is a modification of $(X_{\theta})_{\theta\in\Theta}$. For this purpose let us fix any $\theta\in\Theta$, and let $(\vartheta_{k})_{k\in\NNN}$ be a sequence from $\overline{\Theta}$ which converges to $\theta$ w.r.t.\ $d_{\Theta}$. By construction of $(\overline{X}_{\theta})_{\theta\in\Theta}$ we may invoke inequality (\ref{increment inequality}) to conclude
\begin{eqnarray*}
\EEE\left[d_{\cX}(X_{\theta},\overline{X}_{\vartheta_{k}})^{p}\right]
\leq 
\EEE\left[\eins_{A}\cdot d_{\cX}(X_{\theta}, X_{\vartheta_{k}})^{p}\right]
\leq 
M~d_{\Theta}(\theta,\vartheta_{k})^{q}\to 0\quad\mbox{for}~k\to\infty.
\end{eqnarray*}
In particular, on the one hand the sequence $\big(d_{\cX}(X_{\theta},\overline{X}_{\vartheta_{k}})\big)_{k\in\NNN}$ converges in probability to $0$. On the other hand by definition of $(\overline{X}_{\theta})_{\theta\in\Theta}$, the sequence $\big(d_{\cX}(\overline{X}_{\theta}, \overline{X}_{\vartheta_{k}})\big)_{k\in\NNN}$ converges in probability to $0$. Then if $l\in\NNN$
\begin{eqnarray*}
0
&\leq& 
\limsup_{k\to\infty}\MP\big(\big\{d_{\cX}(X_{\theta}, \overline{X}_{\theta}) > l\big\}\big)\\
&\leq&
\limsup_{k\to\infty}\MP\big(\big\{d_{\cX}(X_{\theta}, X_{\vartheta_{k}}) + d_{\cX}(X_{\vartheta_{k}}, \overline{X}_{\theta}) > l\big\}\big)\\
&\leq& 
\limsup_{k\to\infty}\Big[\MP\big(\big\{d_{\cX}(X_{\theta}, X_{\vartheta_{k}}) > l/2\big\}\big) + 
\MP\big(\big\{d_{\cX}(\overline{X}_{\vartheta_{k}},\overline{X}_{\theta}) > l/2\big\}\big)\Big] = 0,
\end{eqnarray*}
and thus
$$
\MP\big(\big\{d_{\cX}(X_{\theta},\overline{X}_{\theta}) > 0\big\}\big) 
= 
\lim_{l\to\infty}\MP\big(\big\{d_{\cX}(X_{\theta}, \overline{X}_{\theta}) > l\big\}\big) = 0.
$$
Hence $\MP\big(\big\{X_{\theta} \not= \overline{X}_{\theta}\big\}\big) = 0$,
i.e., $(\overline X_\theta)_{\theta\in\Theta}$ is a modification of $(X_\theta)_{\theta\in\Theta}$.

\medskip
Finally, consider an increasing sequence $(\beta_n)_{n\in\NNN}\subset\:]0,(q-t)/p[$ such that $\beta_n\to(q-t)/p$, as $n\to\infty$.
The argument above shows that, for any $n\in\NNN$, the process $(X_\theta)_{\theta\in\Theta}$ has a modification
$(\overline X^n_\theta)_{\theta\in\Theta}$
with H\"older-continuous paths of order $\beta_n$ and satisfying~(\ref{eq:14122019a1}).
Let us fix for a moment an arbitrary $n\in\NNN$.
The processes $(\overline X^n_\theta)_{\theta\in\Theta}$ and $(\overline X^{n+1}_\theta)_{\theta\in\Theta}$ are indistinguishable because they are modifications of each other, both continuous, and $\Theta$ is separable (as a totally bounded metric space).
We can, therefore, find an event $\Omega_n\in\cF$ with $\pr(\Omega_n)=1$ such that, for all $\omega\in\Omega_n$ and $\theta\in\Theta$, it holds
$\overline X^n_\theta(\omega)=\overline X^{n+1}_\theta(\omega)$.
We then define the set
$$
\Omega_\infty=\bigcap_{n\in\NNN}\Omega_n
$$
and notice that $\pr(\Omega_\infty)=1$ and, for all $\omega\in\Omega_\infty$, $\theta\in\Theta$ and $n\in\NNN\setminus\{1\}$, it holds
$$
\overline X^1_\theta(\omega)=\overline X^n_\theta(\omega).
$$
Consequently, the process $(\widetilde X_\theta)_{\theta\in\Theta}$ defined via
$$
\widetilde X_\theta(\omega)=\begin{cases}
\overline X^1_\theta(\omega),&\omega\in\Omega_\infty,\\
\bar x,&\omega\notin\Omega_\infty,
\end{cases}
$$
where $\bar x\in\cX$ is arbitrary, is a modification of $(X_\theta)_{\theta\in\Theta}$ such that \emph{all} its paths are H\"older-continuous of \emph{all} orders $\beta\in]0,(q-t)/p[$. Note that $(\widetilde X_\theta)_{\theta\in\Theta}$ also satisfies~(\ref{eq:14122019a1}). This concludes the proof.
\hfill$\Box$

\appendix
\section{Criterion for uniform tightness}
\label{appendix tightness}
Let $(\Theta,d_{\Theta})$ be a compact metric space and $(\cX,d_{\cX})$ a complete metric space.
We use the notation $w(\cdot,\cdot)$ for the modulus of continuity on $\cC(\Theta,\cX)$, i.e.,
$$
w\colon\cC(\Theta,\cX)\times ]0,\infty[\rightarrow\RRR,\quad(f,\delta)\mapsto \sup_{d_{\Theta}(\theta,\vartheta) \leq \delta\atop \theta,\vartheta\in\Theta}d_{\cX}\big(f(\theta), f(\vartheta)\big).
$$
To make the paper self-contained, we present here the precise formulation of the criterion for uniform tightness,
which was applied in the proof of Proposition~\ref{vereinfachtes Kriterium fuer Straffheit}.

\begin{theorem}
\label{schwache Konvergenz im Pfadraum}
Let $(X_{n})_{n\in\NNN}$ be a sequence of Borel random elements $X_{n}:\Omega\to\cC(\Theta,\cX)$ on some probability space $\OFP$.
Let $\Theta'\subseteq\Theta$ be dense in $\Theta$.
The sequence $(X_{n})_{n\in\NNN}$ is uniformly tight if and only if
\begin{equation}
\label{fd-convergence}
\big(X_{n}(\cdot,\theta)\big)_{n\in\NNN}\text{ is a uniformly tight sequence of random elements in }\cX,\text{ for all }\theta\in\Theta',
\end{equation}
and, for every $\eps>0$,
\begin{equation}
\label{Straffheitsbedingung}
\lim_{\delta\to 0+}\limsup_{n\to\infty}\,\pr\left(w(X_n,\delta)\ge\eps\right) = 0.
\end{equation}
\end{theorem}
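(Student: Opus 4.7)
My plan is to prove both implications. The necessity direction is short: assume $(X_n)$ is uniformly tight, fix $\eps>0$, and choose a compact $K_\eps\subseteq\cC(\Theta,\cX)$ with $\inf_n\pr(X_n\in K_\eps)\geq 1-\eps$. For \eqref{fd-convergence}, the projection $\pi_\theta\colon\cC(\Theta,\cX)\to\cX$, $f\mapsto f(\theta)$, is continuous, so $\pi_\theta(K_\eps)$ is compact in $\cX$ for every $\theta\in\Theta$, and $\pr(X_n(\cdot,\theta)\in\pi_\theta(K_\eps))\geq 1-\eps$. For \eqref{Straffheitsbedingung}, I would use that every compact subset of $\cC(\Theta,\cX)$ is equicontinuous: given $\eps'>0$, cover $K_\eps$ by finitely many balls $B(f_i,\eps'/3)$; each $f_i$ is uniformly continuous on the compact $\Theta$, yielding a uniform $\delta>0$ with $w(f_i,\delta)\leq\eps'/3$, whence $w(f,\delta)\leq\eps'$ for all $f\in K_\eps$, so $\pr(w(X_n,\delta)\geq\eps')\leq\eps$.

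For sufficiency, I would adapt the classical Arzelà--Ascoli-based tightness argument to the present metric setting. Fix $\eta>0$. Since $\Theta$ is a compact metric space, it is separable, hence so is $\Theta'$, and I pick a countable subset $\Theta_0=\{\theta_j:j\in\NNN\}\subseteq\Theta'$ that is dense in $\Theta$. Using \eqref{Straffheitsbedingung} together with the fact that each individual continuous path $X_n(\omega,\cdot)$ is uniformly continuous on the compact $\Theta$ (so $w(X_n,\delta)\to 0$ a.s.\ as $\delta\to 0$, and hence in probability), I choose for each $k\in\NNN$ a $\delta_k>0$ with
$$
\sup_{n\in\NNN}\pr\bigl(w(X_n,\delta_k)\geq 1/k\bigr)\leq \eta\,2^{-(k+1)},
$$
first applying \eqref{Straffheitsbedingung} to obtain the bound for all $n\geq n_k$ and then shrinking $\delta_k$ further to absorb the finitely many $n<n_k$. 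Using \eqref{fd-convergence}, for each $j\in\NNN$ I pick a compact $K_j\subseteq\cX$ with $\sup_n\pr(X_n(\cdot,\theta_j)\notin K_j)\leq\eta\,2^{-(j+1)}$.

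Now I would set
$$
K=\bigl\{f\in\cC(\Theta,\cX):w(f,\delta_k)\leq 1/k\ \forall k\in\NNN,\ f(\theta_j)\in K_j\ \forall j\in\NNN\bigr\},
$$
so a union bound gives $\sup_n\pr(X_n\notin\overline K)\leq\sup_n\pr(X_n\notin K)\leq\eta$. The crux is then verifying that $\overline K$ is compact in $\cC(\Theta,\cX)$, for which I invoke the Arzelà--Ascoli theorem for continuous maps between metric spaces: it suffices that $\overline K$ be equicontinuous and that $\{f(\theta):f\in\overline K\}$ be relatively compact in $\cX$ for every $\theta\in\Theta$. Equicontinuity is built into the definition via $w(f,\delta_k)\leq 1/k$. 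Pointwise relative compactness is immediate at $\theta\in\Theta_0$; for general $\theta\in\Theta$ and any $\eps'>0$, I choose $k$ with $1/k<\eps'$ and $\theta_j\in\Theta_0$ with $d_\Theta(\theta,\theta_j)\leq\delta_k$, so that equicontinuity yields $d_\cX(f(\theta),f(\theta_j))\leq 1/k<\eps'$ for every $f\in K$, placing $\{f(\theta):f\in K\}$ within an $\eps'$-neighbourhood of the compact $K_j$. This shows total boundedness, and completeness of $d_\cX$ upgrades this to relative compactness. The main obstacle is precisely this transfer of pointwise relative compactness from the countable skeleton $\Theta_0$ to arbitrary $\theta\in\Theta$ through the uniform equicontinuity; everything else is bookkeeping via union bounds.
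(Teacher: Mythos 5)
Your proposal is correct and follows essentially the same route as the paper: necessity via continuity of the projections and uniform equicontinuity of a compact set, sufficiency via the countable skeleton $\Theta_0\subseteq\Theta'$, the set $K=\bigcap_k\{f:\,w(f,\delta_k)\le 1/k,\ f(\theta_k)\in K_k\}$, a union bound, and Arzel\`a--Ascoli with the transfer of pointwise total boundedness from $\Theta_0$ to all of $\Theta$ through equicontinuity and completeness of $d_\cX$. The only cosmetic difference is in the necessity direction, where you derive uniform equicontinuity of the compact set by a finite $\eps'/3$-net argument rather than by invoking Arzel\`a--Ascoli together with Lebesgue's number lemma as the paper does; both are fine.
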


We remark that since, for any $\delta>0$, the mapping $w(\cdot,\delta)$ is continuous, $w(X_{n},\delta)$ is a random variable for every $n\in\NNN$
(in particular, the probability in~(\ref{Straffheitsbedingung}) is well-defined).

\medskip\noindent
\begin{proof}
The result is well-known for $\cX = \RRR$ (see \cite[Theorem~1.5.7]{vanderVaartWellner1996}), and a similiar one is shown in the proof of Theorem~14.5 from \cite{Kallenberg1997} in the case of separable and complete $d_{\cX}$. We shall use arguments from the proof of Theorem~14.5 in \cite{Kallenberg1997}.

\medskip
Firstly, let us assume that $(X_{n})_{n\in\NNN}$ is a uniformly tight sequence in $\cC(\Theta,\cX)$, and let $\varepsilon, \eta\in ]0,1[$. Then there exists some compact subset ${\cal K}\subseteq \cC(\Theta,\cX)$ such that 
\begin{equation}
\label{Eins}
\sup_{n\in\NNN}\MP(\{X_{n}\in\cC(\Theta,\cX)\setminus {\cal K}\}) \le \eta.
\end{equation}
By a general version of the Arzela-Ascoli theorem (see \cite[Theorem~47.1]{Munkres2000}) the set ${\cal K}$ is equicontinuous which means that we may find for any $\theta\in\Theta$ some $\delta_{\theta} > 0$ such that $d_{\cX}\big(f(\vartheta),f(\theta)\big) < \varepsilon/2$ for every $f\in {\cal K}$ whenever $d_{\theta}(\vartheta,\theta) < \delta_{\theta}$. Since $\Theta$ is compact we may apply Lebesgue's number lemma (see \cite[Lemma~27.5]{Munkres2000}) to the open cover of $\Theta$ consisting of the open metric balls $B_{\delta_{\theta}}(\theta)$ around $\theta$ with radius $\delta_{\theta}$. In this way we may select some $\delta > 0$ such that $w(f,\delta) < \varepsilon$ holds for every $f\in {\cal K}$. Then, for all $n\in\NNN$,
\begin{eqnarray*}
\MP(\{w(X_{n},\delta)\geq\varepsilon\}) 
\le
\MP(\{X_{n}\in\cC(\Theta,\cX)\setminus {\cal K}\}) \le \eta,
\end{eqnarray*}
and thus
\begin{equation}
\label{Zwei}
\lim_{\delta\to 0+}\limsup_{n\to\infty}\MP\big(\big\{w(X_{n},\delta)\geq\varepsilon\big\}\big)\le\eta.
\end{equation}
Moreover, for $\theta\in\Theta$ the projection mapping 
$$
\pi_\theta\colon\cC(\Theta,\cX)\to\cX,\quad f\mapsto f(\theta)
$$
is continuous so that $\pi_{\theta}(\mathcal{K})$ is a compact subset of $\cX$. Furthermore by (\ref{Eins})
\begin{equation}
\label{Drei}
\sup_{n\in\NNN}\MP\big(\{X_{n}(\cdot,\theta)\in\cX\setminus \pi_{\theta}({\cal K})\}\big))\leq\sup_{n\in\NNN}\MP(\{X_{n}\in\cC(\Theta,\cX)\setminus {\cal K}\}) \le \eta.
\end{equation}
Then we conclude from (\ref{Zwei}) and (\ref{Drei}) that $(X_{n})_{n\in\NNN}$ satisfies conditions (\ref{fd-convergence}) and~(\ref{Straffheitsbedingung}) because $\varepsilon,\eta$ were chosen arbitrarily in $]0,1[$. This finishes the proof of the only if part.

\medskip
For the if part, let $(X_{n})_{n\in\NNN}$ fulfill conditions (\ref{fd-convergence}) and~(\ref{Straffheitsbedingung}). Fix any $\gamma \in ]0,1[$. Since $\Theta$ is compact, the mappings $X_{n}(\omega,\cdot)$ are uniformly continuous for $\omega\in\Omega$ and $n\in\NNN$. Hence, for $\omega\in\Omega$ and $n\in\NNN$, we have $w(X_{n},\delta)\to 0$, as $\delta\to 0$.
Combining this observation with condition~(\ref{Straffheitsbedingung}), we may find for any $k\in\NNN$ some $\overline{\delta}_{k} >  0$ such that 
\begin{equation}
\label{Stetigkeitsmodulus Abschaetzung}
\sup_{n\in\NNN}\MP\big(\big\{w(X_{n},\overline{\delta}_{k}) > 2^{-k}\big\}\big)\leq 2^{- k -1 }~\gamma.
\end{equation}
Since $\Theta$ is compact, the metric on $\Theta'$ is separable. In addition $\Theta'$ is dense. Then, there is some sequence $(\theta_{k})_{k\in\NNN}$ in $\Theta'$ which is a dense subset of $\Theta$. Hence in view of~(\ref{fd-convergence}) we may find for every $k\in\NNN$ some compact subset $K_{k}$ of $\cX$ such that
\begin{equation}
\label{Straffheit endlichdimensional}
\sup_{n\in\NNN}\MP\big(\big\{X_{n}(\cdot,\theta_{k})\not\in K_{k}\}\big)\leq 2^{- k -1 }~\gamma.
\end{equation}
Now define
$$
B := \bigcap_{k=1}^{\infty}\left\{f\in \cC(\Theta,\cX)\mid f(\theta_{k})\in K_{k}, w(f,\overline{\delta}_{k})\leq 2^{-k}\right\},
$$
and denote the topological closure of $B$ w.r.t.\ $d_{\infty}$ by $cl(B)$. Then 
we have
\begin{align*}
&
\sup_{n\to\infty}\MP\left(\left\{X_{n}\in \cC(\Theta,\cX)\setminus cl(B)\right\}\right)\\
&\leq 
\sup_{n\to\infty}\MP\left(\left\{X_{n}\in \cC(\Theta,\cX)\setminus B\right\}\right)\\
&\leq
\sup_{n\to\infty} 
\sum_{k=1}^{\infty}\left[\MP(\{X_{n}(\cdot,\theta_{k})\in\cX\setminus K_{k}\}) + \MP(\{w(X_{n},\overline{\delta}_{k}) > 2^{-k}\})\right].
\end{align*}
So invoking~(\ref{Stetigkeitsmodulus Abschaetzung}) along with~(\ref{Straffheit endlichdimensional}), we end up with 
$$
\sup_{n\to\infty}\MP\left(\left\{X_{n}\in \cC(\Theta,\cX)\setminus cl(B)\right\}\right) 
\leq 
 \sum_{k=1}^{\infty}\left[2^{- k - 1}~\gamma + 2^{-k - 1}~\gamma\right] = \gamma.
$$
Hence it is left to show that $B$ is a relatively compact subset of $\cC(\Theta,\cX)$.

\medskip
For an arbitrary $\varepsilon > 0$ and for every $\theta\in\Theta$,
choose some $k\in\NNN$ such that $2^{-k} < \varepsilon$. Then by construction $B$, we obtain $w(f,\overline{\delta}_{k}) < \varepsilon$ for $f\in B$. In particular, $d_{\cX}\big(f(\vartheta),f(\theta)\big) < \varepsilon$ for every $f\in B$ and any $\vartheta\in\Theta$ with $d_{\Theta}(\vartheta,\theta)< \overline{\delta}_{k}$. Thus we have shown that $B$ is some equicontinuous subset of $\cC(\Theta,\cX)$. Therefore by a general version of the Arzela-Ascoli theorem \cite[Theorem~47.1]{Munkres2000} it remains to show that the set 
$\{f(\theta)\mid f\in B\}$ is a relatively compact subset of $\cX$ for any $\theta\in\Theta$. 
This means to show that this set is totally bounded w.r.t.\ $d_{\cX}$ due to completeness of $d_{\cX}$.

\medskip
Let us fix any $\theta\in\Theta$. Choose, for an arbitrary $\varepsilon > 0$, some $k_{0}\in\NNN$ such that $2^{-k_{0}} < \varepsilon/2$. Since $\{\theta_{k}\mid k\in\NNN\}$ is dense, we may find some $k_{1}\in\NNN$ such that $d_{\Theta}(\theta_{k_{1}},\theta) < \overline{\delta}_{k_{0}}$. This implies by the construction of the set $B$ that
\begin{equation}
\label{flach}
f(\theta_{k_{1}})\in K_{k_{1}}\quad\mbox{and}\quad d_{\cX}\big(f(\theta_{k_{1}}),f(\theta)\big) < \varepsilon/2\quad\mbox{for every}~ f\in B.
\end{equation} 
The set $K_{k_{1}}$ is assumed to be compact, in particular, it is totally bounded w.r.t.\ $d_{\cX}$, so that there exist $m\in\NNN$ and $x_{1},\dots,x_{m}\in\cX$ satisfying
$$
K_{k_{1}}\subseteq \bigcup_{i=1}^{m}B_{\varepsilon/2}(x_{i}),
$$
where, for $r>0$, $B_r(x_{i})$ denotes the open $d_{\cX}$-metric ball around $x_{i}$ with radius $r$. Then we may conclude from~(\ref{flach})
$$
\{f(\theta)\mid f\in B\}\subseteq \bigcup_{i=1}^{m}B_{\varepsilon}(x_{i}).
$$
This shows that the set $\{f(\theta)\mid f\in B\}$ is totally bounded w.r.t.\ $d_{\cX}$, which completes the proof.
\end{proof}

\textbf{Acknowledgement:}
We thank the anonymous referee for many constructive comments and suggestions that helped improve the paper.

\section*{Declarations}

\textbf{Conflicts of interest:} The authors have no competing interests to declare that are relevant to the content of this article.

\medskip\noindent
\textbf{Funding:} The authors did not receive support from any organization for the submitted work.

\end{document}